\newtheorem{theorem}{Theorem}
\theoremstyle{plain}
\newtheorem{conjecture}[theorem]{Conjecture}
\newtheorem{fact}[theorem]{Fact}
\newtheorem{lemma}[theorem]{Lemma}
\newtheorem{proposition}[theorem]{Proposition}
\numberwithin{equation}{section}
\def\F{\mathcal{F}}
\def\HM{\mathcal{HM}}
\def\K{\mathcal{K}}
\def\M{\mathcal{M}}
\begin{document}
\title{A degree version of the Hilton--Milner theorem}
\thanks{
JH is supported by FAPESP (Proc. 2013/03447-6, 2014/18641-5). HH is partially supported by a Simons Collaboration Grant.
YZ is partially supported by NSF grant DMS-1400073.}
\author{Peter Frankl}
\address{Alfr\'ed R\'enyi Institute of Mathematics, P.O.Box 127, H-1364 Budapest, Hungary}
\email[Peter Frankl]{peter.frankl@gmail.com}
\author{Jie Han}
\address{Instituto de Matem\'{a}tica e Estat\'{\i}stica, Universidade de S\~{a}o Paulo, Rua do Mat\~{a}o 1010, 05508-090, S\~{a}o Paulo, Brazil}
\email[Jie Han]{jhan@ime.usp.br}
\author{Hao Huang}
\address{Department of Math and CS, Emory University, Atlanta, GA 30322}
\email[Hao Huang]{hao.huang@emory.edu}
\author{Yi Zhao}
\address
{Department of Mathematics and Statistics, Georgia State University, Atlanta, GA 30303} 
\email[Yi Zhao]{yzhao6@gsu.edu}

\date{\today}
\subjclass[2010]{05D05}%
\keywords{Intersecting families, Hilton--Milner theorem, Erd\H{o}s--Ko--Rado theorem}

\begin{abstract}
An intersecting family of sets is trivial if all of its members share a common element. Hilton and Milner proved a strong stability result for the celebrated Erd\H{o}s--Ko--Rado theorem: when $n> 2k$, every non-trivial intersecting family of $k$-subsets of $[n]$ has at most $\binom{n-1}{k-1}-\binom{n-k-1}{k-1}+1$ members. One extremal family $\HM_{n, k}$ consists of a $k$-set $S$ and all $k$-subsets of $[n]$ containing a fixed element $x\not\in S$ and at least one element of $S$. We prove a degree version of the Hilton--Milner theorem: if $n=\Omega(k^2)$ and $\F$ is a non-trivial intersecting family of $k$-subsets of $[n]$, then $\delta(F)\le \delta(\HM_{n.k})$, where $\delta(\F)$ denotes the minimum (vertex) degree of $\F$.  Our proof uses several fundamental results in extremal set theory, the concept of kernels, and a new variant of the Erd\H{o}s--Ko--Rado theorem.
\end{abstract}

\maketitle

\section{Introduction}

A family $\F$ of sets is called \emph{intersecting} if $A\cap B\neq \emptyset$ for all $A, B\in \F$. A fundamental problem in extremal set theory is to study the properties of intersecting families. 
For positive integers $k, n$, let $[n]=\{1,2,\dots, n\}$ and $\binom{V}k$ denote the family of all $k$-element subsets ($k$-subsets) of $V$. 
We call a family on $V$ \emph{$k$-uniform} if it is a subfamily of $\binom{V}k$.
A \emph{full star} is a family that consists of all the $k$-subsets of $[n]$ that contains a fixed element. 
We call an intersecting family $\F$ \emph{trivial} if it is a subfamily of a full star. 
The celebrated Erd\H{o}s--Ko--Rado (EKR) theorem~\cite{EKR} states that, when $n\ge 2k$, every $k$-uniform intersecting family on $[n]$ has at most $\binom{n-1}{k-1}$ members, and the full star shows that the bound $\binom{n-1}{k-1}$ is best possible.
Hilton and Milner~\cite{HM67} proved the uniqueness of the extremal family in a stronger sense: if $n> 2k$, every non-trivial intersecting family of $k$-subsets of $[n]$ has at most $\binom{n-1}{k-1}-\binom{n-k-1}{k-1}+1$ members. It is easy to see that the equality holds for the following family, denoted by $\HM_{n, k}$, which consists of a $k$-set $S$ and all $k$-subsets of $[n]$ containing a fixed element $x\not\in S$ and at least one vertex of $S$. For more results on intersecting families, see a recent survey by Frankl and Tokushige~\cite{FrTo16}.

Given a family $\F$ and $x\in V(\F)$, we denote by $\F(x)$ the subfamily of $\F$ consisting of all the members of $\F$ that contain $x$, i.e., $\F(x):=\{F\in \mathcal{\F}: x\in F\}$. Let $d_{\F}(x) := |\F(x)|$ be the \emph{degree} of $x$. Let $\Delta(\F) := \max_x d_{\F}(x)$ and $ \delta(\F) := \min_x d_{\F}(x)$ denote the maximum and minimum degree of $\F$, respectively. There were extremal problems in set theory that considered the maximum or minimum degree of families satisfying certain properties. For example, Frankl \cite{Frankl87} extended the Hilton--Milner theorem by giving sharp upper bounds on the size of intersecting families with certain maximum degree. Bollob\'as, Daykin, and Erd\H os \cite{BDE} studied the minimum degree version of a well-known conjecture of Erd\H os \cite{erdos65} on matchings.

Huang and Zhao~\cite{HuZh} recently proved a minimum degree version of the EKR theorem, which states that, if $n> 2k$ and $\F$ is a $k$-uniform intersecting family on $[n]$, then $\delta(\F)\le \binom{n-2}{k-2}$, and the equality holds only if $\F$ is a full star.
This result implies the EKR theorem immediately: given a $k$-uniform intersecting family $\F$,
by recursively deleting elements with the smallest degree until $2k$ elements are left, we derive that
\[
|\F|\le \binom{n-2}{k-2}+\binom{n-3}{k-2}+\cdots+\binom{2k-1}{k-2} +\binom{2k-1}{k-1} = \binom{n-1}{k-1}.
\]
Frankl and Tokushige~\cite{FrTo16_degree} gave a different proof of the result of \cite{HuZh} for $n\ge 3k$. Generally speaking, a minimum degree condition forces the sets of a family to be distributed somewhat evenly and thus the size of a family that is required to satisfy a property might be smaller than the one without degree condition. Unless the extremal family is very regular, an extremal problem under the minimum degree condition seems harder than the original extremal problem because one cannot directly apply the \emph{shifting method} (a powerful tool in extremal set theory).

In this paper we study the minimum degree version of the Hilton--Milner theorem.

\begin{theorem}\label{thm:main}
Suppose $k\ge 4$ and $n\ge c k^2$, where $c=30$ for $k=4,5$ and $c=4$ for $k\ge 6$. If $\F\subseteq \binom{[n]}{k}$ is a non-trivial intersecting family, then $\delta(\F)\le \delta(\HM_{n, k}) = \binom{n-2}{k-2} - \binom{n- k-2}{k-2}$.
\end{theorem}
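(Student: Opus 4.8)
The plan is to show that every non-trivial intersecting $\F\subseteq\binom{[n]}{k}$ contains a vertex $z$ with $d_\F(z)\le\delta(\HM_{n,k})$; since $\delta(\F)\le d_\F(z)$, this is the whole statement. A short computation first confirms $\delta(\HM_{n,k})=\binom{n-2}{k-2}-\binom{n-k-2}{k-2}$, attained exactly by the vertices outside $S\cup\{x\}$ (the vertex $x$ and the vertices of $S$ have strictly larger degree). Non-triviality means $\F$ lies in no full star, i.e.\ its covering number $\tau(\F)$ (the least size of a set meeting every member) satisfies $2\le\tau(\F)\le k$, the upper bound because any member of $\F$ is a cover. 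I would split the argument according to $\tau(\F)$.

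If $\tau(\F)\ge 3$, the family is small: by the classical bounds of Frankl on intersecting families with a given covering number, $|\F|\le g(k)\binom{n-3}{k-3}$ with $g$ depending only on $k$ and $g(k)=O(k^{2})$. Since $\sum_v d_\F(v)=k|\F|$, averaging over all $n$ vertices produces a vertex of degree at most $kg(k)\binom{n-3}{k-3}/n$, while $\delta(\HM_{n,k})\ge c_0\,k\binom{n-3}{k-3}$ for an absolute constant $c_0>0$; the hypothesis $n\ge ck^{2}$ makes the average strictly below $\delta(\HM_{n,k})$, so we are done. (Should the available covering-number bound be slightly weaker, one averages instead over the $n-k$ vertices outside a fixed member of $\F$, each contributing at most $k-1$ to the degree sum.)

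The case $\tau(\F)=2$ is the crux. Fix a minimal cover $\{a,b\}$ and partition $\F=\F_a\sqcup\F_b\sqcup\F_{ab}$ into the members containing only $a$, only $b$, and both; minimality gives $\F_a,\F_b\ne\emptyset$, and since any $A\in\F_a$ and $B\in\F_b$ intersect outside $\{a,b\}$, the $(k-1)$-uniform families $\G_a=\{A\setminus a:A\in\F_a\}$ and $\G_b=\{B\setminus b:B\in\F_b\}$ on $[n]\setminus\{a,b\}$ are cross-intersecting. Suppose some $z\in[n]\setminus\{a,b\}$ lies in no member of $\F_b$. Fixing $B_0\in\F_b$: every member of $\G_a$ meets $B_0\setminus b$ while $z\notin B_0\setminus b$, so the number of members of $\F_a$ through $z$ is at most $\binom{n-3}{k-2}-\binom{n-k-2}{k-2}$; the number of members of $\F_{ab}$ through $z$ is at most $\binom{n-3}{k-3}$; and no member of $\F_b$ contains $z$. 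Summing, $d_\F(z)\le\binom{n-3}{k-3}+\binom{n-3}{k-2}-\binom{n-k-2}{k-2}=\delta(\HM_{n,k})$ — precisely the situation of the extremal example, where $\F_b=\{S\}$. By symmetry the same holds if some $z\notin\{a,b\}$ lies in no member of $\F_a$. Hence we may assume that $\F_a$ and $\F_b$ together cover every vertex of $[n]\setminus\{a,b\}$. Note that the ``kernel'' of $\F$ — the union of all minimal $2$-covers — has size $O(k^{2})$ (each of its vertices has bounded $2$-cover-degree, and any member of $\F$ is a vertex cover of the graph of $2$-covers), so there are many vertices outside it; but for such $z$ the naive bound only gives $d_\F(z)\le 2\bigl(\binom{n-3}{k-2}-\binom{n-k-2}{k-2}\bigr)+\binom{n-3}{k-3}$, which overshoots $\delta(\HM_{n,k})$ by a factor-of-two deficit.

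Removing that deficit is where I expect the real difficulty and where the new variant of the Erd\H{o}s--Ko--Rado theorem enters. The $(k-1)$-uniform cross-intersecting families $\G_a,\G_b$ each have matching number at most $k-1$, yet both cover $[n]\setminus\{a,b\}$; a degree/stability refinement of the cross-intersecting Erd\H{o}s--Ko--Rado theorem should then assert that, unless some good vertex $z$ with $d_{\G_a}(z)+d_{\G_b}(z)\le\binom{n-3}{k-2}-\binom{n-k-2}{k-2}$ already exists, $|\F_a|+|\F_b|$ exceeds a threshold that forces $\G_a$ and $\G_b$ to pass essentially through a common vertex $w\in[n]\setminus\{a,b\}$. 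In that last sub-case $\G_a$ and $\G_b$ decompose through $w$, so every $z\notin\{a,b,w\}$ satisfies $d_\F(z)\le\binom{n-3}{k-3}+2\binom{n-4}{k-3}\le\delta(\HM_{n,k})$, the last inequality being an elementary estimate valid for $k\ge4$ and $n\ge ck^{2}$. The main obstacle is thus to make this dichotomy quantitatively tight — the cross-intersecting input must be strong enough to convert ``both sides cover the ground set'' into either a low-degree vertex or the common-vertex structure — and it is exactly the small values $k=4,5$, where the relevant cross-intersecting estimates leave less slack, that force the larger constant $c=30$.
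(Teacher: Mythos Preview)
Your plan splits on the covering number $\tau(\F)$, whereas the paper never does: for $k\ge 6$ it splits on the \emph{maximum} degree $\Delta(\F)$, and for $k\in\{4,5\}$ it bounds $|\F|$ through the kernel $\K$. More importantly, the step you yourself flag as the crux is left as a hope rather than an argument. In the sub-case $\tau(\F)=2$ with both $\F_a$ and $\F_b$ covering $[n]\setminus\{a,b\}$, you acknowledge a factor-of-two overshoot and then postulate a ``degree/stability refinement of the cross-intersecting EKR theorem'' that would force $\G_a,\G_b$ essentially through a common vertex $w$. No such dichotomy is proved, and the paper's new EKR variant is not of this type at all. Theorem~\ref{lem_three_sets} bounds a single intersecting family every member of which meets three fixed pairwise-disjoint $\ell$-sets; there is no cross-intersecting pair and no common-vertex conclusion. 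In the proof it is applied to $\F\setminus\F(u)$ for a vertex $u$ of maximum degree: when $\Delta(\F)>\binom{n-1}{k-1}-\binom{n-3}{k-1}$, Frankl's matching theorem (Theorem~\ref{thm:Frankl13}) yields three disjoint $(k-1)$-sets $T_1,T_2,T_3$ in the link of $u$, every member of $\F\setminus\F(u)$ meets all three, and Theorem~\ref{lem_three_sets} gives $|\F\setminus\F(u)|\le(k-1)^2\binom{n-4}{k-3}$, contradicting the lower bound $|\F\setminus\F(u)|>\tfrac{k-2}{2}\binom{n-2}{k-2}$ obtained in Lemma~\ref{lem:F2} via the $t$-intersecting EKR theorem. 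When $\Delta(\F)$ is below that threshold, Theorem~\ref{thm:Frankl} gives $|\F|\le 3\binom{n-2}{k-2}$, already below the lower bound coming from the degree hypothesis once $k\ge 6$. So the mechanism that removes your factor-of-two deficit is a three-set transversal bound inside the link of a high-degree vertex, not a structural statement about the cross-intersecting pair $(\G_a,\G_b)$.

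A secondary gap: your $\tau(\F)\ge 3$ step asserts $|\F|\le g(k)\binom{n-3}{k-3}$ with $g(k)=O(k^{2})$, but you give no reference, and the routine covering-number bounds give a larger power of $k$; with only $n\ge ck^{2}$ the averaging would then fail. The paper avoids this entirely. Its $k\in\{4,5\}$ argument first shows $|\K_2|\le 1$ (Lemma~\ref{lem:F2}(i) produces two members of $\F\setminus\F(u)$ meeting in a single point, which rules out a second $2$-cover through $u$), then uses $|\K_i|\le k^{i}$ for $i\ge 3$ to get $|\F|\le\binom{n-2}{k-2}+\sum_{i=3}^{k}k^{i}\binom{n-i}{k-i}\le(1+k^{2}/29)\binom{n-2}{k-2}$ for $n\ge 30k^{2}$, contradicting the lower bound $|\F|>\tfrac{29}{30}(k-2)\binom{n-2}{k-2}$ from~\eqref{eq:deg}. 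Thus both halves of your outline would need substantial new input to go through, and neither matches the route the paper takes.
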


Han and Kohayakawa~\cite{HaKo} recently determined the maximum size of a non-trivial intersecting family that is not a subfamily of $\HM_{n, k}$, which is $\binom{n-1}{k-1} - \binom{n-k-1}{k-1} - \binom{n-k-2}{k-2}+2$.
Later Kostochka and Mubayi~\cite{KoMu} determined the maximum size of a non-trivial intersecting family that is not a subfamily of $\HM_{n, k}$ or the extremal families given in \cite{HaKo} for sufficiently large $n$.
Furthermore, Kostochka and Mubayi~\cite[Theorem 8]{KoMu} characterized \emph{all} maximal intersecting $3$-uniform families $\F$ on $[n]$ for $n\ge 7$ and $|\F|\ge 11$.
Using a different approach, Polcyn and Ruci\'nski~\cite[Theorem 4]{PoRu} characterized \emph{all} maximal intersecting $3$-uniform families $\F$ on $[n]$ for $n\ge 7$, in particular, there are fifteen such families, including the full star and $\HM_{n, 3}$. It is straightforward to check that all these families have minimum degree at most $3$ -- this gives the following proposition.

\begin{proposition}\label{thm:k=3}
If $n\ge 7$ and $\F\subseteq \binom{[n]}{3}$ is a non-trivial intersecting family, then $\delta(\F)\le \delta(\HM_{n, 3}) = 3$.
\end{proposition}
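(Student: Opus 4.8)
The plan is to derive the proposition from the known classification of maximal intersecting triple systems, combined with one elementary observation about degrees: degree is monotone under containment, i.e. if $\F\subseteq\F'$ then $d_{\F}(x)\le d_{\F'}(x)$ for every $x\in[n]$, and hence $\delta(\F)\le\delta(\F')$. Consequently it suffices to bound $\delta$ over the \emph{maximal} non-trivial intersecting families on $[n]$, and for each such family one only has to exhibit a single low-degree vertex.

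First I would extend $\F$ to a maximal intersecting family $\F'\supseteq\F$, which exists since $\binom{[n]}{3}$ is finite. Because $\F$ is not contained in any full star and $\F\subseteq\F'$, the family $\F'$ is not contained in any full star; and since a maximal intersecting family contained in a full star must coincide with it, this shows $\F'$ is not a full star. By the classification of Polcyn and Ruci\'nski~\cite[Theorem 4]{PoRu} (equivalently, Kostochka and Mubayi~\cite[Theorem 8]{KoMu}), valid for $n\ge 7$, the family $\F'$ is then one of the fourteen maximal intersecting triple systems on $[n]$ other than the full star. It remains to inspect this short list and check that each member has minimum degree at most $3$. This splits into two easy cases: a family whose support has fewer than $n$ vertices automatically has a vertex of degree $0$; and the remaining families --- which include $\HM_{n,3}$, the family $\{A\in\binom{[n]}{3}:|A\cap S|\ge 2\}$ for a fixed triple $S$, and, when $n=7$, the Fano plane --- are handled by a direct count showing minimum degree exactly $3$, attained at vertices outside the relevant ``special'' triple. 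For $\HM_{n,3}$ this yields the claimed value $\delta(\HM_{n,3})=\binom{n-2}{1}-\binom{n-5}{1}=3$. Combining the cases, $\delta(\F)\le\delta(\F')\le 3=\delta(\HM_{n,3})$.

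The real content of the argument is external: it quotes the complete enumeration of maximal intersecting triple systems, whose proof is nontrivial. Granting that, the only points that need care are that the minimum degree is taken over all of $[n]$ (so families with small support trivially qualify via a degree-$0$ vertex) and that none of the unbounded families on the list possesses a vertex of degree larger than $3$ --- a short computation in each case. There is no genuine obstacle beyond this bookkeeping.
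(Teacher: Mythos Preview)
Your approach is essentially identical to the paper's: the paper derives the proposition directly from the Polcyn--Ruci\'nski classification of the fifteen maximal intersecting $3$-uniform families on $[n]$ for $n\ge 7$, noting that it is straightforward to check each has minimum degree at most $3$. One small slip: in your final paragraph you write that ``none of the unbounded families on the list possesses a vertex of degree larger than $3$,'' but of course the center of $\HM_{n,3}$ has large degree---you mean (as you correctly stated earlier) that each such family possesses a vertex of degree \emph{at most} $3$.
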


In order to prove Theorem~\ref{thm:main}, we prove a new variant of the EKR theorem, which is closely related to the EKR theorem for direct products given by Frankl (see Theorem~\ref{thm:Frankl96}).

\begin{theorem}	\label{lem_three_sets}
Given integers $k\ge 3$, $\ell\ge 4$, and $m \ge k\ell$, let $T_1, T_2, T_3$ be three disjoint $\ell$-subsets of $[m]$. If $\mathcal{F}$ is a $k$-uniform intersecting family on $[m]$ such that every member intersects all of $T_1, T_2, T_3$, then $|\mathcal{F}| \le \ell^2 \binom{m-3}{k-3}$.
\end{theorem}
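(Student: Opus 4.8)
The plan is to fix one of the three sets, say $T_1$, and classify members of $\F$ according to which single element of $T_1$ they hit; this loses a factor of at most $\ell$ and reduces the problem to a family all of whose members contain a fixed vertex $x \in T_1$. So write $\F = \bigcup_{x \in T_1} \F_x$ where $\F_x = \{F \in \F : F \cap T_1 = \{x\}\} \cup (\text{some fixed assignment for } F \text{ meeting } T_1 \text{ in} \ge 2 \text{ points})$; more cleanly, for each $F$ pick one element $x(F) \in F \cap T_1$ and set $\F_x := \{F : x(F) = x\}$, so $|\F| \le \sum_{x \in T_1} |\F_x| \le \ell \max_{x} |\F_x|$. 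Thus it suffices to show $|\F_x| \le \ell \binom{m-3}{k-3}$ for each $x$. Now $\G := \{F \setminus \{x\} : F \in \F_x\}$ is a $(k-1)$-uniform family on $[m] \setminus \{x\}$, still intersecting (since $\F_x \subseteq \F$ is intersecting, and in fact one can say more), and every member of $\G$ still meets both $T_2$ and $T_3$.

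Next I would repeat the trick on $T_2$: for each $G \in \G$ pick $y(G) \in G \cap T_2$ and split $\G = \bigcup_{y \in T_2} \G_y$, so $|\G| \le \ell \max_y |\G_y|$, and it remains to bound $|\G_y|$ where $\G_y$ is a $(k-1)$-uniform intersecting family, all of whose members contain the fixed vertex $y$ and all of whose members meet $T_3$. Passing to links again, $\HM := \{G \setminus \{y\} : G \in \G_y\}$ is a $(k-2)$-uniform family on $m - 2$ vertices, and crucially every member of $\HM$ still intersects $T_3$, a set disjoint from $\{x,y\}$ of size $\ell$. A $(k-2)$-uniform family on $[m]\setminus\{x,y\}$ in which every set meets the fixed $\ell$-set $T_3$ has size at most $\sum_{z \in T_3} \binom{m-3}{k-3} = \ell\binom{m-3}{k-3}$ simply by the union bound over which element of $T_3$ is hit (no intersecting hypothesis needed at this last stage). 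Combining the three factors gives $|\F| \le \ell \cdot \ell \cdot \ell\binom{m-3}{k-3}$, which is off by a factor of $\ell$ from the claimed bound.

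To remove that extra factor, the key point is that one of the three "union bound" steps can be improved using the EKR theorem or the intersecting property. In the first step, after passing to the link of $x \in T_1$, the family $\G$ is an intersecting family on $m-1 \ge k(\ell) - 1 \ge 2(k-1)$ vertices that is still nontrivial in the relevant sense; but more usefully, in the last step the family $\HM$ is itself intersecting (it is a link of an intersecting family, hence intersecting once we note all members shared $x$ and $y$ originally — actually links of intersecting families on a common-point deletion need care, so I would instead apply the improvement at the $T_2$ stage). Concretely: $\G_y$ is a $(k-1)$-uniform intersecting family in which every member contains $y$ and meets $T_3$; by the EKR-type bound for families with a prescribed small intersecting constraint — or directly by bounding, for each $z \in T_3$, the subfamily $\{G \in \G_y : z \in G\}$, which is an intersecting family of $(k-1)$-sets through two fixed points $y,z$ on $m-2$ vertices and hence by EKR has size at most $\binom{m-3}{k-3}$ — we get $|\G_y| \le \ell \binom{m-3}{k-3}$ as before, so the savings must come from the very first step. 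There I would not use a naive union bound: instead, observe that $\F$ with every member meeting $T_1$, restricted to a fixed pair structure, lets one apply the EKR theorem once at full strength rather than paying $\ell$; alternatively, invoke Theorem~\ref{thm:Frankl96} (the EKR theorem for direct products, cited in the excerpt) with the three coordinates corresponding to $T_1, T_2, T_3$, which is designed for exactly this product structure and yields the $\ell^2$ (not $\ell^3$) bound directly.

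The main obstacle I anticipate is precisely this bookkeeping of where the intersecting hypothesis can be cashed in: a triple union bound loses $\ell^3$, but we are allowed only $\ell^2$, so exactly one of the three transversality constraints must be "upgraded" via EKR or the direct-product EKR theorem rather than handled by a crude union bound. Getting the parameters to line up — checking that after deleting two vertices we still have $m - 2 \ge 2(k-2)$ so EKR applies, and tracking that $\ell \ge 4$ and $m \ge k\ell$ give enough room — is routine, but identifying the right single place to apply the strong tool (and verifying the residual family there genuinely is intersecting, not merely transversal to $T_3$) is the crux.
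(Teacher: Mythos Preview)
Your proposal does not close the gap between the trivial $\ell^3\binom{m-3}{k-3}$ bound and the required $\ell^2\binom{m-3}{k-3}$. The difficulty you flag is real, but your suggested fixes do not work. The crucial error is the repeated assertion that a link family inherits the intersecting property: once you fix $x\in T_1$ and pass to $\G=\{F\setminus\{x\}:F\in\F_x\}$, the family $\G$ need \emph{not} be intersecting, because two sets $F,F'\in\F_x$ may well satisfy $F\cap F'=\{x\}$. The same objection applies to your $\G_y$ and to your final-stage family $\HM$. Consequently none of your three union-bound steps can be ``upgraded'' via EKR on a link, and the bound $\{G\in\G_y: z\in G\}\le\binom{m-3}{k-3}$ that you state is just the trivial count of $(k-1)$-sets through two fixed vertices; it uses no intersecting hypothesis and gains nothing beyond the $\ell^3$ bound.

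You do gesture, at the very end, toward Theorem~\ref{thm:Frankl96}, and indeed that is exactly how the paper proceeds. But applying it is not a one-liner: members of $\F$ can meet $T_1\cup T_2\cup T_3$ in anywhere from $3$ to $k$ points, with various profiles, so no single direct-product family contains $\F$. The paper stratifies $\F=\bigcup_{r=3}^k\F_r$ by $r=|F\cap(T_1\cup T_2\cup T_3)|$, applies Theorem~\ref{thm:Frankl96} separately to each stratum (with a further split for $r=4$ into the three profiles $(1,1,2),(1,2,1),(2,1,1)$, and a case split for $r\ge 5$ depending on whether $3\ell\ge 2r$), and then sums the resulting bounds via the Vandermonde identity $\sum_{i\ge 0}\binom{3\ell-3}{i}\binom{m-3\ell}{k-3-i}=\binom{m-3}{k-3}$ to obtain exactly $\ell^2\binom{m-3}{k-3}$. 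Your proposal contains none of this structure; in particular, the saving of one factor of $\ell$ comes precisely from the $\max_i k_i/n_i = 1/\ell$ in Theorem~\ref{thm:Frankl96} applied to each stratum, which is where the intersecting hypothesis is actually cashed in.
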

Theorem~\ref{lem_three_sets} becomes trivial when $\ell=1$ because every family $\F$ of $k$-sets that intersect  $T_1, T_2, T_3$ satisfies $|\F|\le \binom{m-3}{k-3}$. Our bound in Theorem~\ref{lem_three_sets} is asymptotically tight because a star with a center in $T_1\cup T_2\cup T_3$ contains about $\ell^2 \binom{m-3}{k-3}$ $k$-sets that intersect  $T_1, T_2, T_3$.

It was shown in \cite{HuZh} that one can derive the minimum degree version of the EKR theorem for $n=\Omega(k^2)$ by using the Hilton--Milner Theorem and simple averaging arguments (thus the difficulty of the result in \cite{HuZh} lies in deriving the tight bound $n\ge 2k+1$).
However, we can not use this naive approach to prove Theorem~\ref{thm:main} for sufficiently large $n$. Indeed, let $\F$ be a non-trivial intersecting family that is not a subfamily of $\HM_{n,k}$. The result of Han and Kohayakawa~\cite{HaKo} says that $|\F|$ is asymptotically at most $(k-1)\binom{n-2}{k-2}$, and in turn, the average degree of $\F$ is asymptotically at most $\frac{k(k-1)}{k-2}\binom{n-3}{k-3}$. Unfortunately, this is much larger than $\delta(\HM_{n,k}) \approx k\binom{n-3}{k-3}$ as $k$ is fixed and $n$ is sufficiently large.

Our proof of Theorem~\ref{thm:main} applies several fundamental results in extremal set theory as well as Theorem~\ref{lem_three_sets}. The following is an outline of our proof.
Let $\F$ be a non-trivial intersecting family such that $\delta(\F)> \delta(\HM_{n, k})$.
For every $u\in [n]$, we obtain a lower bound for $|\F\setminus \F(u)|$ by applying the assumption on $\delta(\F)$ and the Frankl--Wilson theorem \cite{Frankl78,Wilson} on the maximum size of $t$-intersecting families. If $k=4,5$, then we derive a contradiction by considering the \emph{kernel} of $\F$ (a concept introduced by Frankl \cite{Frankl78_kernel}). When $k\ge 6$, we separate two cases based on $\Delta(\F)$. When $\Delta(\F)$ is large, assume that $|\F(u)| = \Delta(\F)$ and let $\F_2 :=  \F\setminus \F(u)$. A result of Frankl \cite{Frankl13} implies that $\F(u)$ contains three edges $E_i:=\{u\}\cup T_i$, $i\in [3]$, where $T_1, T_2, T_3$ are pairwise disjoint.
Since $\F_2$ is intersecting and every member of $\F_2$ meets each of $T_1, T_2, T_3$, Theorem~\ref{lem_three_sets} gives an upper bound on $|\F_2|$, which contradicts the lower bound that we derived earlier. When $\Delta(\F)$ is small, we apply the aforementioned result of Frankl~\cite{Frankl87} to obtain an upper bound on $|\F|$, which contradicts the assumption on $\delta(\F)$.

\section{Tools}
\subsection{Results that we need}
%
%
Given a positive integer $t$, a family $\F$ of sets is called \emph{$t$-intersecting} if $|A\cap B|\ge t$ for all $A, B\in \F$. 
A $t$-intersecting EKR theorem was proved in \cite{EKR} for sufficiently large $n$.
Later Frankl~\cite{Frankl78} (for $t\ge 15$) and Wilson~\cite{Wilson} (for all $t$) determined the exact threshold for $n$.

\begin{theorem}\cite{Frankl78, Wilson}\label{thm:Wilson}
Let $n\ge (t+1)(k-t+1)$ and let $\F$ be a $k$-uniform $t$-intersecting families on $[n]$.
Then $|\F|\le \binom{n-t}{k-t}$.
\end{theorem}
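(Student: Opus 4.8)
The plan is to combine the shifting (compression) method with an algebraic input that pins down the exact threshold. \textbf{Step 1 (reduce to a shifted family).} For $1\le a<b\le n$ let the shift $S_{ab}$ replace each $A\in\F$ by $(A\setminus\{b\})\cup\{a\}$ whenever the latter set is not already present in the family. Iterating all such operators terminates in a \emph{shifted} family $\F'$ (closed under replacing any element by a smaller one) with $|\F'|=|\F|$, and a short swap argument shows that $t$-intersection is preserved under each $S_{ab}$. Hence I may assume $\F$ itself is shifted.

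\textbf{Step 2 (structure of shifted $t$-intersecting families).} For a shifted $t$-intersecting family one shows, purely combinatorially, that every $A\in\F$ satisfies $|A\cap[t+2j]|\ge t+j$ for some integer $j\ge 0$; equivalently $\F\subseteq\bigcup_{j\ge 0}\A_j$, where $\A_j=\{A\in\binom{[n]}{k}:|A\cap[t+2j]|\ge t+j\}$ is the $j$-th Frankl family. The hypothesis $n\ge(t+1)(k-t+1)$ enters precisely here: comparing $|\A_{j}|$ with $|\A_{j-1}|$ reduces to a one-line binomial inequality whose sign flips exactly at $n=(t+1)(k-t+1)$, so in this range $\A_0=\{A:[t]\subseteq A\}$, of size $\binom{n-t}{k-t}$, is the largest among the $\A_j$. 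The remaining, delicate task is to upgrade ``$\F$ is covered by the $\A_j$'' to the quantitative bound $|\F|\le|\A_0|$; this can be done by a weight/exchange argument that pushes the mass of $\F$ toward $\A_0$, or, for $t$ not too small, by Frankl's more hands-on analysis of how $\F$ meets the nested segments $[t],[t+2],[t+4],\dots$.

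\textbf{Step 3 (algebraic route, all $t$).} To cover every value of $t$ with the exact constant I would instead run the Delsarte linear-programming bound in the Johnson scheme $J(n,k)$: a $t$-intersecting family is an independent set in the graph joining the pairs $A,B$ with $|A\cap B|\le t-1$. Using the inclusion matrices $W_i\in\{0,1\}^{\binom{[n]}{i}\times\binom{[n]}{k}}$ one has $(W_i^{\top}W_i)_{A,B}=\binom{|A\cap B|}{i}$, which is positive semidefinite and depends only on $|A\cap B|$; a suitable nonnegative combination $\sum_i c_i\,W_i^{\top}W_i$ furnishes a feasible solution to the dual program whose value equals $\binom{n-t}{k-t}$ exactly when $n\ge(t+1)(k-t+1)$ -- this is Wilson's argument. \textbf{Main obstacle.} For $n$ well above the threshold both routes give the bound with room to spare; the genuine difficulty is \emph{tightness at $n=(t+1)(k-t+1)$}, where $\A_0$ and $\A_1$ become comparable, so neither crude shifting nor the plain Hoffman ratio bound suffices -- one really needs Frankl's careful combinatorics (carried out for $t\ge 15$) or Wilson's precise polynomial identities (all $t$). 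I would also remark that the whole statement is subsumed by the Ahlswede--Khachatrian complete intersection theorem, which identifies the $\A_j$ as the only extremal families for every $n$.
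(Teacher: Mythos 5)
This statement is not proved in the paper at all: it is quoted as a black box from Frankl (for $t\ge 15$) and Wilson (for all $t$), so there is no ``paper proof'' to match your argument against, and your proposal has to stand on its own as a proof of the Frankl--Wilson theorem. As such it has a genuine gap. Steps 1 and 2 are fine as far as they go (shifting preserves $t$-intersection, and the covering $\F\subseteq\bigcup_{j\ge0}\A_j$ for shifted $\F$ is a correct standard lemma), but the covering by the Frankl families gives no cardinality bound by itself: knowing that $\A_0$ is the largest of the $\A_j$ when $n\ge(t+1)(k-t+1)$ does not bound $|\F|$ by $|\A_0|$, since $\F$ can meet many $\A_j$ simultaneously. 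You acknowledge this and defer the decisive step to ``a weight/exchange argument'' or ``Frankl's more hands-on analysis,'' but that deferred step is the entire theorem; Frankl's combinatorial execution of it only works for $t\ge15$, and no version of it known before Ahlswede--Khachatrian closes the gap for small $t$ at the exact threshold.

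Step 3 has the same character: you correctly name Wilson's route (the inclusion matrices $W_i$, the positive semidefinite Gram matrices $W_i^{\top}W_i$ with entries $\binom{|A\cap B|}{i}$, a dual-feasible combination achieving $\binom{n-t}{k-t}$), but the content of Wilson's proof is precisely the construction of the coefficients $c_i$ and the verification, via explicit eigenvalue/polynomial identities in the Johnson scheme, that the resulting matrix is feasible and that its value is $\binom{n-t}{k-t}$ exactly down to $n=(t+1)(k-t+1)$; none of that is carried out here. So what you have is an accurate and well-organized map of the known proofs (including the correct remark that Ahlswede--Khachatrian subsumes the statement), but with the quantitative heart of each route left as a citation rather than an argument, it is an outline, not a proof. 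For the purposes of this paper that is consistent with how the result is used --- as a cited tool --- but it should not be presented as a proof of Theorem~\ref{thm:Wilson}.
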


As mentioned in Section 1, Frankl~\cite{Frankl87} determined the maximum possible size of an intersecting family under a maximum degree condition.

\begin{theorem}\cite{Frankl87}\label{thm:Frankl}
Suppose $n > 2k$, $3\le i\le k+1$, $\F\subseteq \binom{[n]}k$ is intersecting.
If $\Delta(\F)\le \binom{n-1}{k-1} - \binom{n-i}{k-1}$, then $|\F|\le \binom{n-1}{k-1} - \binom{n-i}{k-1} + \binom{n-i}{k-i+1}$.
\end{theorem}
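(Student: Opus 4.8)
The plan is to analyze $\F$ near a vertex of maximum degree and then invoke the Kruskal--Katona theorem. Write $B := \binom{n-1}{k-1} - \binom{n-i}{k-1}$ and $M := B + \binom{n-i}{k-i+1}$ for the degree hypothesis and the claimed bound. If $\F$ is trivial, with every member containing some $x$, then $|\F| = d_\F(x) \le \Delta(\F) \le B < M$, so assume $\F$ is non-trivial. Fix $v$ with $d_\F(v) = \Delta(\F) =: D$, write $\F = \F(v) \cup \G$ where $\G := \F\setminus\F(v)$, so $|\F| = D + |\G|$, and put $\F_1 := \{F\setminus\{v\} : F\in\F(v)\} \subseteq \binom{[n]\setminus\{v\}}{k-1}$, so $|\F_1| = D$. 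Since $\F$ is intersecting and no member of $\G$ contains $v$, the families $\G$ and $\F_1$ are cross-intersecting on the $(n-1)$-element ground set $[n]\setminus\{v\}$, and $\G$ itself is intersecting. It remains to prove $|\G| \le M - D$.

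Two ranges of $D$ are comparatively easy. If $D \le M/k$, then fixing any $E\in\F$ every member of $\F$ meets $E$, so $\F\subseteq\bigcup_{u\in E}\F(u)$ and $|\F|\le kD\le M$. For large $D$ I would pass to the complement $\mathcal{H} := \binom{[n]\setminus\{v\}}{k-1}\setminus\F_1$, of size $\binom{n-1}{k-1} - D$: cross-intersection says that for every $G\in\G$ all $(k-1)$-subsets of the $(n-1-k)$-set $([n]\setminus\{v\})\setminus G$ lie in $\mathcal{H}$, so the $(k-1)$-shadow of this collection of $(n-1-k)$-sets lies in $\mathcal{H}$, and Kruskal--Katona bounds $|\G|$ by the number of $(n-1-k)$-sets spanned by an initial colex segment of size $|\mathcal{H}|$; dually, $|\G|$ is at most the number of $k$-sets meeting every member of the colex-largest $D$-element $(k-1)$-uniform family. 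When $D = \binom{n-1}{k-1}-\binom{n-j}{k-1}$ with $3\le j\le i$, that family is $\{A : A\cap R\neq\emptyset\}$ for a fixed $(j-1)$-set $R$, whose transversal $k$-sets are exactly those containing $R$; hence the count is $\binom{n-j}{k-j+1}$ and $|\F| = D+|\G|\le M_j := \binom{n-1}{k-1}-\binom{n-j}{k-1}+\binom{n-j}{k-j+1}$. As $M_j$ is increasing in $j$ with $M_i = M$, this gives $|\F|\le M$ at every such breakpoint, and a convexity estimate for the spanned count should cover $D$ throughout $[\,\binom{n-1}{k-1}-\binom{n-3}{k-1},\,B\,]$.

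The step I expect to be the main obstacle is the intermediate range of $D$: large enough that $|\F|\le kD$ is too weak, yet small enough that the colex-largest $D$-element $(k-1)$-family has far too many transversal $k$-sets for the Kruskal--Katona estimate alone to yield $|\G|\le M-D$. Here one must use both that $\G$ is intersecting and the degree hypothesis at vertices other than $v$. The key mechanism seems to be a reduction: if $\F_1$ concentrates many of its sets on a single vertex $w$ (as colex-largest families do, with $w$ the top element), then $d_\F(w) = |\{A\in\F_1 : w\in A\}| + |\{G\in\G : w\in G\}| \le \Delta(\F) = D$ forces $\G$ to have few members through $w$ --- none at all if $\F_1$ lies in the star at $w$ --- so, after deleting those, $\G$ is cross-intersecting with the $(k-2)$-uniform family $\{A\setminus\{w\} : A\in\F_1,\ w\in A\}$ on the smaller ground set $[n]\setminus\{v,w\}$, allowing an inductive descent on $k$. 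Making this descent accumulate to exactly the bound $M-D$ rather than to something weaker is the delicate point, and it is where the precise value $\binom{n-i}{k-i+1}$ and the full interplay between the degree condition and the cross-intersecting structure come into play.
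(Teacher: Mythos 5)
This statement is quoted in the paper from Frankl's 1987 article and is not proved there, so the only fair comparison is with your proposal on its own terms --- and as written it is not a proof but a plan whose central step is missing. The pieces you do carry out are fine in outline: the trivial case, the bound $|\F|\le k\Delta(\F)$ when $\Delta(\F)\le M/k$, and the cross-intersecting duality between $\G=\F\setminus\F(v)$ and the link $\F_1$ of a maximum-degree vertex, which via Kruskal--Katona gives $|\F|\le D+\binom{n-j}{k-j+1}=M_j\le M$ exactly when $D$ equals a breakpoint $\binom{n-1}{k-1}-\binom{n-j}{k-1}$. But the theorem's actual content lies precisely where you stop. First, even in the ``large $D$'' range, the assertion that $D+T(D)$ (with $T(D)$ the transversal count of the colex-largest $D$-element $(k-1)$-family) is controlled by its breakpoint values is unproved: $T$ is only non-increasing, adding one set to the colex family can leave $T$ unchanged, so between breakpoints $D+T(D)$ can rise above $M_j$, and the ``convexity estimate'' you invoke is exactly the computation you would have to do. Second, and more seriously, for intermediate degrees the approach has no bound at all: take $i=3$, so $M\approx 2\binom{n-2}{k-2}+\binom{n-3}{k-2}$, and suppose $\Delta(\F)\approx\binom{n-2}{k-2}$; then $k\Delta(\F)$ is far larger than $M$, and the colex family of that size sits inside a single star, whose transversal $k$-sets number about $\binom{n-2}{k-1}$, so the Kruskal--Katona bound is also far larger than $M$. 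This regime is the heart of Frankl's theorem, and your paragraph about it (``the key mechanism seems to be a reduction\dots the delicate point'') names the difficulty without resolving it: no induction hypothesis is formulated, the deletion-of-$w$ step is not quantified, and nothing shows the losses ``accumulate to exactly $M-D$''. So the proposal identifies a plausible framework (link decomposition, cross-intersecting families, shadows) but leaves a genuine gap where the real argument --- in Frankl's paper a shifting-based analysis --- has to go.
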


Given a $k$-uniform family $\F$, a \emph{matching of size $s$} is a collection of $s$ vertex-disjoint sets of $\F$.
A well-known conjecture of Erd\H os \cite{erdos65} states that if $n\ge (s+1)k$ and $\F\subseteq \binom{[n]}k$ satisfies $|\F| > \max\{\binom{n}{k} - \binom{n-s}{k}, \binom{k(s+1)-1}{k}\}$, then $\F$ contains a matching of size $s+1$.
Frankl~\cite{Frankl13} verified this conjecture for $n\ge (2s+1)k-s$.

\begin{theorem}\cite{Frankl13}\label{thm:Frankl13}
Let $n\ge (2s+1)k-s$ and let $\F\subseteq \binom{[n]}k$. If $|\F| > \binom{n}{k} - \binom{n-s}{k}$, then $\F$ contains a matching of size $s+1$.
\end{theorem}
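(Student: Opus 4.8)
\textbf{Proof proposal for Theorem~\ref{lem_three_sets}.}

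The plan is to bound $|\mathcal F|$ by first identifying, inside each $T_i$, a small number of ``heavy'' vertices that carry almost all of $\mathcal F$, and then controlling the remaining members via the intersecting property. Concretely, since every $F\in\mathcal F$ meets $T_1$, we have $\mathcal F = \bigcup_{x\in T_1}\mathcal F(x)$, and likewise for $T_2$ and $T_3$. The natural first move is to pass to the ``kernel'' structure: call a vertex $x\in T_i$ \emph{heavy} if $d_{\mathcal F}(x) > \binom{m-3}{k-3}$, and \emph{light} otherwise. The members of $\mathcal F$ that use only light vertices of $T_1$ (one vertex each, at least) number at most $\ell\binom{m-3}{k-3}$ by a trivial degree sum; so up to an error of $\ell\binom{m-3}{k-3}$ we may assume every member of $\mathcal F$ contains a heavy vertex of $T_1$, and symmetrically for $T_2,T_3$. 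If some $T_i$ has no heavy vertex at all, then $|\mathcal F|\le \ell\binom{m-3}{k-3}\le \ell^2\binom{m-3}{k-3}$ and we are done; so assume each $T_i$ contains at least one heavy vertex.

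The core of the argument is then: a family all of whose members meet a \emph{bounded} set of heavy vertices. Let $H_i\subseteq T_i$ be the heavy vertices, $|H_i|\le\ell$. Modulo the $O(\ell\binom{m-3}{k-3})$ error absorbed above, every $F\in\mathcal F$ meets $H_1$. Fix a heavy $x\in H_1$. The subfamily $\mathcal F(x)$ is an intersecting family, each of whose members meets $T_2$ and $T_3$; I would bound $|\mathcal F(x)|$ by summing over the (at most $\ell$) choices of which vertex of $T_2$ lies in $F$ and similarly for $T_3$: for fixed $x\in T_1$, $y\in T_2$, $z\in T_3$ the number of $k$-sets containing $\{x,y,z\}$ is $\binom{m-3}{k-3}$, giving $|\mathcal F(x)|\le \ell^2\binom{m-3}{k-3}$ — but summing this over $x\in H_1$ gives an extra factor of $\ell$, which is too lossy. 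The right fix is to use the intersecting property \emph{between} the stars $\mathcal F(x)$, $x\in H_1$: because $\mathcal F$ is intersecting, for two distinct heavy $x,x'\in H_1$ every $F\in\mathcal F(x)$ meets every $F'\in\mathcal F(x')$; so once $|H_1|\ge 2$ one can peel off one heavy vertex $x$ of $T_1$, bound $\mathcal F(x)$ crudely by $\binom{m-1}{k-1}\le$ (something), and then bound $\mathcal F\setminus\mathcal F(x)$ recursively — but this still does not obviously close. The cleaner route, which I expect to carry the proof, is to dispense with $T_3$ momentarily: $\mathcal F$ is an intersecting family whose members all meet both $T_1$ and $T_2$; restricting attention to the pair $(T_1,T_2)$ and invoking (a version of) Frankl's EKR theorem for direct products, referenced in the excerpt as Theorem~\ref{thm:Frankl96}, one gets that the subfamily of members meeting $T_1$ and $T_2$ in prescribed single vertices $x\in T_1$, $y\in T_2$ has size at most $\binom{m-2}{k-2}$ trivially; summing over the $\ell^2$ pairs $(x,y)$ gives $|\mathcal F|\le \ell^2\binom{m-2}{k-2}$, which is the wrong order. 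So the third set $T_3$ is essential and must be used to gain the extra $\frac{k-2}{m-2}$ factor: the right statement to prove is that for \emph{each fixed} $(x,y)\in T_1\times T_2$, the members of $\mathcal F$ containing $\{x,y\}$ and meeting $T_3$ number at most $\binom{m-3}{k-3}$ — which is true by summing over the vertex of $T_3$ they contain, \emph{provided} we only count one vertex of $T_3$; but a member could contain several, leading to overcounting, not undercounting, so in fact $|\{F\in\mathcal F : x,y\in F, F\cap T_3\ne\emptyset\}|\le \ell\binom{m-3}{k-3}$ and summing over $(x,y)$ recovers $\ell^3\binom{m-3}{k-3}$. The factor $\ell^3$ must be cut to $\ell^2$ using that $\mathcal F$ is intersecting, specifically that not all $\ell^2$ pairs $(x,y)$ can simultaneously be ``rich''.

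Here is the mechanism I would actually use to get $\ell^2$. Consider the bipartite-type ``trace'': for each $F\in\mathcal F$ pick $x(F)\in F\cap T_1$ and $y(F)\in F\cap T_2$; this partitions $\mathcal F$ into $\le\ell^2$ classes $\mathcal F_{x,y}$. If two classes $\mathcal F_{x,y}$ and $\mathcal F_{x',y'}$ are both nonempty with $x\ne x'$ and $y\ne y'$, then a member $F\in\mathcal F_{x,y}$ with $F\cap T_1=\{x\}$, $F\cap T_2=\{y\}$ must intersect a member $F'\in\mathcal F_{x',y'}$ with $F'\cap T_1=\{x'\}$, $F'\cap T_2=\{y'\}$ only in $[m]\setminus(T_1\cup T_2\cup T_3)$ or in $T_3$ — this is a real restriction since $m\ge k\ell$ makes such ``pure'' members plentiful. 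The plan is to show by a shifting/compression argument on $T_1$ (replacing each $F$ by $F$ with its $T_1$-vertex shifted to a fixed $x_0$ whenever this keeps $\mathcal F$ intersecting) that one may assume all members meet $T_1$ in the \emph{same} vertex $x_0$, at the cost of only a bounded number of exceptional members; once $F\cap T_1\ni x_0$ for all $F$, the family is a star through $x_0$, every member meets $T_2$ and $T_3$, and $|\mathcal F|\le\ell^2\binom{m-3}{k-3}$ by the product bound on $\{y,z\}\in T_2\times T_3$, as desired. \textbf{The main obstacle} is precisely this shifting step: ordinary shifting need not preserve the property ``meets all of $T_1,T_2,T_3$'' (shifting within $T_1$ is fine, but shifting can destroy a member's intersection with $T_2$ or $T_3$), so I expect the heart of the proof to be a careful, property-respecting compression — shift only coordinates inside $T_1$, or inside $[m]\setminus(T_1\cup T_2\cup T_3)$ — together with an argument that after this restricted shifting the heavy vertices of $T_1$ collapse to one. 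If restricted shifting does not suffice, the fallback is the direct double-counting: bound $\sum_{x\in T_1} |\mathcal F(x)|$ using that the $\mathcal F(x)$'s pairwise cross-intersect, which forces all but one of them to be ``small'' in the sense of Theorem~\ref{thm:Frankl} or the Hilton--Milner bound, again yielding the $\ell^2$ rather than $\ell^3$ dependence.
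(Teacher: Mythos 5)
Your write-up does not address the statement you were asked to prove. The statement is Theorem~\ref{thm:Frankl13}, Frankl's bound toward the Erd\H{o}s matching conjecture: if $n\ge (2s+1)k-s$ and $|\F|>\binom{n}{k}-\binom{n-s}{k}$, then $\F\subseteq\binom{[n]}{k}$ contains a matching of size $s+1$. (In the paper this is a quoted external result from Frankl's 2013 paper, used as a black box; the paper itself gives no proof of it.) Your proposal instead sketches an argument for Theorem~\ref{lem_three_sets}; nothing in it involves the parameter $s$, matchings, or the threshold $\binom{n}{k}-\binom{n-s}{k}$, so as a proof of the stated theorem it is simply off target.

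Even read as an attempt at Theorem~\ref{lem_three_sets}, it is not a proof. You concede that your first two routes lose a factor of $\ell$ (giving $\ell^3\binom{m-3}{k-3}$), and the step that is supposed to recover the factor --- the restricted shifting that collapses all $T_1$-intersections to a single vertex $x_0$ --- is exactly what you flag as ``the main obstacle'' and leave unresolved; the fallback via pairwise cross-intersecting stars is likewise only gestured at. Ordinary shifting inside $T_1$ also does not obviously reduce to a star through one vertex of $T_1$ ``at the cost of a bounded number of exceptional members'': an intersecting family meeting $T_1$ can genuinely spread its $T_1$-degrees over many vertices, and quantifying the exceptional part is precisely the missing content. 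For contrast, the paper proves Theorem~\ref{lem_three_sets} without any shifting: it partitions $\F$ into $\F_r$ according to $r=|F\cap(T_1\cup T_2\cup T_3)|$, applies Theorem~\ref{thm:Frankl96} (EKR for direct products) with the partition $T_1,T_2,T_3,[m]\setminus(T_1\cup T_2\cup T_3)$ to get $|\F_3|\le \ell^2\binom{m-3\ell}{k-3}$ and $|\F_4|\le 3(\ell-1)\ell^2\binom{m-3\ell}{k-4}$, bounds $|\F_r|$ for $r\ge 5$ by trivial counting, and sums everything with Vandermonde's identity to obtain exactly $\ell^2\binom{m-3}{k-3}$. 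If you want to salvage your approach, the direct-product theorem applied classwise is the tool that replaces your unproved compression step; but in any case, to answer the actual prompt you would need to engage with Theorem~\ref{thm:Frankl13} itself (e.g., via Frankl's shifting argument for the matching conjecture), which your proposal never does.
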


Frankl~\cite{Frankl96} proved an EKR theorem for direct products. 

\begin{theorem} \cite{Frankl96} \label{thm:Frankl96} 
Suppose $n=n_1+\cdots+n_d$ and $k=k_1 + \cdots + k_d$, where $n_i\ge k_i$ are positive integers.  Let $X_1\cup \cdots \cup X_d$ be a partition of $[n]$ with $|X_i|=n_i$, and
	\[ \mathcal{H}=\left\{F \in \binom{[n]}{k}: |F \cap X_i|=k_i~\textrm{for~}i=1, \dots, d \right\}. \]
	If $n_i \ge 2k_i$ for all $i$ and $\mathcal{F} \subseteq \mathcal{H}$ is intersecting, then 
	\[ \frac{|\mathcal{F}|}{|\mathcal{H}|} \le \max_{i} \frac{k_i}{n_i}. \]
\end{theorem}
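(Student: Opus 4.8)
The plan is to prove this via Katona's permutation method. Fix a cyclic ordering $\pi_i$ of each $X_i$, and call $F\in\mathcal H$ a \emph{$\pi$-arc-set} (for $\pi=(\pi_1,\dots,\pi_d)$) if $F\cap X_i$ is a cyclic interval of $\pi_i$ for every $i$. There are exactly $\prod_i n_i$ such sets for each $\pi$, and by symmetry every $H\in\mathcal H$ is a $\pi$-arc-set for the same number of tuples $\pi$; hence, summing over all tuples $\pi$,
\[
\frac{|\mathcal F|}{|\mathcal H|}=\frac{\sum_{\pi}\bigl|\{\pi\text{-arc-sets in }\mathcal F\}\bigr|}{\sum_{\pi}\bigl|\{\pi\text{-arc-sets in }\mathcal H\}\bigr|}\le\max_{\pi}\frac{\bigl|\{\pi\text{-arc-sets in }\mathcal F\}\bigr|}{\prod_i n_i}.
\]
So it suffices to bound, for a fixed $\pi$, the number of $\pi$-arc-sets in $\mathcal F$. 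Encoding such a set by the tuple $(a_1,\dots,a_d)$ of interval starting points identifies them with $T:=\prod_i\mathbb Z_{n_i}$, and since $n_i\ge 2k_i$, two arc-sets are disjoint exactly when their tuples $a,b$ satisfy $a_i-b_i\in B_i:=\{k_i,\dots,n_i-k_i\}$ for all $i$. Thus it is enough to prove the \emph{torus claim}: if $A\subseteq T$ contains no two points $a\ne b$ with $a_i-b_i\in B_i$ for every $i$, then $|A|\le(\max_i k_i/n_i)\prod_j n_j$. For $d=1$ this is Katona's cyclic form of the EKR theorem (valid for $n_1\ge 2k_1$), the base case.

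For $d\ge 2$, relabel so that $k_1/n_1=\max_i k_i/n_i$, write $T=\mathbb Z_{n_1}\times R$ with $R=\prod_{i\ge 2}\mathbb Z_{n_i}$, and for $\rho\in R$ set $A_\rho=\{x:(x,\rho)\in A\}$, $f(\rho)=|A_\rho|$, so $|A|=\sum_\rho f(\rho)$ and the goal is $\sum_\rho f(\rho)\le k_1|R|$. Let $G_i=\mathrm{Cay}(\mathbb Z_{n_i},B_i)$ and let $G'=\bigotimes_{i\ge 2}G_i$ be the tensor (categorical) product, a graph on $R$. The crucial observation is: \emph{for every edge $\{\rho,\rho'\}$ of $G'$ with $A_\rho,A_{\rho'}\ne\emptyset$ one has $f(\rho)+f(\rho')\le 2k_1$}. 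Indeed, for $x\in A_\rho$ and $y\in A_{\rho'}$ the points $(x,\rho),(y,\rho')$ of $A$ are ``$B$-far'' in every coordinate $\ge 2$, so goodness of $A$ forces $x-y\in D_1:=\{-(k_1-1),\dots,k_1-1\}$; hence $A_\rho-A_{\rho'}\subseteq D_1$, and a Cauchy--Davenport (for composite $n_1$, Kneser) estimate in $\mathbb Z_{n_1}$ gives the stated bound. Consequently, calling a vertex $u$ \emph{super-heavy} if $f(u)>2k_1$, every $G'$-neighbour of a super-heavy vertex has empty fibre, the super-heavy vertices form an independent set in $G'$, and in general no $G'$-edge has both endpoints ``heavy'' (i.e.\ of fibre size $>k_1$).

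Suppose first there is no super-heavy vertex. As $G'$ is vertex-transitive with a nonempty edge set (each $B_i\ne\emptyset$ since $n_i\ge 2k_i$, and a tensor product of graphs with edges has an edge), assigning weight $1/\deg(G')$ to each edge is a fractional perfect matching, so $\sum_\rho f(\rho)=\sum_{\{\rho,\rho'\}\in E(G')}\frac1{\deg(G')}(f(\rho)+f(\rho'))$. Every edge contributes at most $2k_1/\deg(G')$: if neither endpoint is heavy this is clear; at most one endpoint is heavy; and if $u$ is heavy with $f(u)=k_1+m$ (so $1\le m\le k_1$ here) then the displayed inequality, applied also to neighbours with empty fibre since $k_1-m\ge0$, gives $f(\rho')\le k_1-m$ for every neighbour $\rho'$, whence $f(u)+f(\rho')\le 2k_1$. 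Therefore $\sum_\rho f(\rho)\le\frac{2k_1}{\deg(G')}|E(G')|=k_1|R|=(\max_i k_i/n_i)|T|$, as required. (If $n_i=2k_i$ for all $i$ then every fibre has size $\le 2k_1$, so this case is already complete.)

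The remaining case — some fibre of size exceeding $2k_1$ — is the step I expect to be the main obstacle. One has the clean inequality $\deg(G')=\prod_{i\ge 2}(n_i-2k_i+1)\ge n_1/k_1-1$ (this is exactly where $n_i\ge 2k_i$ and the maximality of $k_1/n_1$ enter), so the closed $G'$-neighbourhood of a single super-heavy vertex $u$ carries total fibre mass $f(u)\le n_1\le k_1(1+\deg(G'))$, i.e.\ average fibre at most $k_1$; the difficulty is that these closed neighbourhoods overlap. Resolving this seems to need either peeling super-heavy vertices off one at a time while controlling how the remaining instance shrinks, or the observation that all super-heavy vertices must cluster into a short window in each coordinate $\ge 2$ (again via the inequality above), which confines the support of $A$ and reduces matters to a lower-dimensional instance to which the inductive hypothesis applies. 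A secondary, more routine obstacle is the sum-set step: for composite $n_1$ one needs Kneser's theorem together with the fact that no nontrivial coset of $\mathbb Z_{n_1}$ fits inside the length-$(2k_1-1)$ interval $D_1$ once $n_1\ge 2k_1$, with slightly more care in the narrow range $2k_1\le n_1<4k_1$.
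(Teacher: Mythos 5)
This statement is quoted in the paper from Frankl's 1996 article and is not proved there, so your proposal can only be judged on its own terms. Much of it does hold up: the multi-cycle Katona averaging correctly reduces the theorem to your ``torus claim''; the fibre inequality $f(\rho)+f(\rho')\le 2k_1$ for $G'$-adjacent nonempty fibres is true (and the Kneser caveat you flag for $2k_1\le n_1<4k_1$ is manageable: a nontrivial stabilizer $H$ forces $A_\rho-A_{\rho'}$ to be a union of $H$-cosets inside the interval $D_1$, and counting such cosets gives $f(\rho)+f(\rho')\le |H|(2k_1-n_1)+n_1\le 2k_1$); and the fractional-perfect-matching argument in the regular graph $G'$ correctly settles the case in which no fibre exceeds $2k_1$.

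The genuine gap is the super-heavy case, which you explicitly leave open (``the step I expect to be the main obstacle \dots seems to need either \dots or \dots''), and it is neither vacuous nor routine. A full fibre $\mathbb{Z}_{n_1}\times\{\rho_0\}$ is an admissible set for the torus claim, so fibres of size up to $n_1\gg 2k_1$ really occur, and for them the edge bound fails (their $G'$-neighbours are simply empty). Your single-vertex estimate $1+\deg(G')\ge n_1/k_1$ does not suffice: the super-heavy vertices form an independent set of $G'$ whose closed neighbourhoods overlap, so what is actually needed is an expansion-type statement of the form $|N_{G'}[U]|\ge (n_1/k_1)\,|U|$ for such independent sets $U$, plus a way to run the averaging on the residual (no longer vertex-transitive) graph after $N[U]$ is deleted; and $G'$ is a tensor product of circulants, where the structure of large independent sets is exactly the delicate point (already for $d=1$ the torus claim is Katona's form of EKR). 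Neither of your two suggested remedies (peeling one super-heavy vertex at a time, or showing super-heavy vertices cluster) is carried out, so as written the argument proves the theorem only under the additional hypothesis that every fibre has size at most $2k_1$, and the proof of the cited theorem is incomplete.
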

Note that the $d=1$ case of Theorem~\ref{thm:Frankl96} is the EKR theorem.

\subsection{Kernels of intersecting families}

Frankl introduced the concept of \emph{kernels} (and called them \emph{bases}) for intersecting families in \cite{Frankl78_kernel}.
Given $\F\subseteq \binom{V}k$, a set $S\subseteq V$ is called a \emph{cover} of $\F$ if $S\cap A\ne \emptyset$ for all $A\in \F$. For example, if $\F$ is intersecting, then every member of $\F$ is a cover. 
Given an intersecting family $\F$, we define its \emph{kernel} $\K$ as
\[
\K:=\{S: \text{$S$ is a cover of $\F$ and any $S'\subsetneq S$ is not a cover of $\F$}\}.
\]

An intersecting family $\F$ is called \emph{maximal} if $\F\cup \{G\}$ is not intersecting for any $k$-set $G\not\in F$.
Note that, when proving Theorem~\ref{thm:main}, we may assume that $\F$ is maximal because otherwise we can add more $k$-sets to $\F$ such that the resulting intersecting family is still non-trivial and satisfies the minimum degree condition.
We observe the following fact on the kernels.

\begin{fact}\label{fact:kernel}
If $n\ge 2k$ and $\F\in \binom{[n]}k$ is a maximal intersecting family, then $\K$ is also intersecting.
\end{fact}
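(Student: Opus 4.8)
The plan is to leverage the hypothesis that $\F$ is maximal, via the following observation: since $\F$ is a maximal intersecting family, every $k$-element cover of $\F$ is a member of $\F$. Indeed, if $G$ were a $k$-element cover with $G\notin\F$, then every member of $\F$ would meet $G$, so $\F\cup\{G\}$ would still be intersecting, contradicting maximality. (Conversely every member of $\F$ is trivially a cover, but only the stated direction will be needed.) So the first step is simply to record this equivalence between $k$-element covers of $\F$ and members of $\F$.

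Next I would argue by contradiction. Suppose $\K$ is not intersecting, so there exist $C_1,C_2\in\K$ with $C_1\cap C_2=\emptyset$. The role of the minimality clause in the definition of $\K$ is to keep these sets small: the covering number of $\F$ is at most $k$ since every member of $\F$ is a cover, and -- see the next paragraph -- it suffices to treat the case $|C_1|,|C_2|\le k$. Given $|C_1|,|C_2|\le k$, $C_1\cap C_2=\emptyset$, and $n\ge 2k$, the number of vertices outside $C_1\cup C_2$ is at least $n-|C_1|-|C_2|\ge (k-|C_1|)+(k-|C_2|)$; using two disjoint blocks of such vertices, I would enlarge $C_1$ to a $k$-set $\widehat C_1\supseteq C_1$ and $C_2$ to a $k$-set $\widehat C_2\supseteq C_2$ with $\widehat C_1\cap\widehat C_2=\emptyset$. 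A superset of a cover is again a cover, so $\widehat C_1$ and $\widehat C_2$ are $k$-element covers of $\F$, hence members of $\F$ by the first step. But $\widehat C_1$ and $\widehat C_2$ are disjoint, contradicting the assumption that $\F$ is intersecting. Hence no such $C_1,C_2$ exist and $\K$ is intersecting.

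The routine ingredients here -- the maximality observation, the remark that a superset of a cover is a cover, and the counting that makes the two disjoint $k$-set extensions possible (precisely where the hypothesis $n\ge 2k$ enters) -- are all easy. The step that would require genuine care, and which I would isolate as a separate claim, is the size control on the kernel elements: the extension argument breaks down as soon as one of the covers involved has more than $k$ vertices, so one must make sure to work only with covers of size at most $k$ (in particular with minimum covers of $\F$), which is all that the subsequent use of the kernel requires.
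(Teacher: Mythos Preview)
Your proposal is correct and follows essentially the same argument as the paper: assume two kernel elements are disjoint, extend each to a $k$-set (possible since $n\ge 2k$ and kernel elements have size at most $k$), observe these extensions are $k$-element covers and hence members of $\F$ by maximality, and derive a contradiction with $\F$ intersecting. You are somewhat more explicit than the paper about why kernel elements have size at most $k$ (the paper simply asserts the extension is possible), but the structure is identical.
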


\begin{proof}
Suppose there are $K_1, K_2\in \K$ such that $K_1\cap K_2=\emptyset$.
Since $n\ge 2k$, we can find two disjoint $k$-sets $F_1, F_2$ on $[n]$ such that $K_i\subseteq F_i$ for $i=1,2$.
For $i=1,2$, since $K_i$ is a cover of $\F$, $F_i$ intersects all members of $\F$.
Since $\F$ is maximal, we derive that $F_1, F_2\in \F$. This contradicts the assumption that $F_1, F_2$ are disjoint.
\end{proof}

For $i\in [k]$, let $\K_i:=\K\cap \binom{[n]}{i}$. If an intersecting family $\F$ is non-trivial, then $\K_1=\emptyset$. Below we prove an upper bound for $|\K_i|$, $3\le i\le k$, where the $i=k$ case was given by Erd\H os and Lov\'asz \cite{ErLo75}.

\begin{lemma}\label{lem:Frankl}
For $3\le i\le k$, we have $|\K_i|\le k^i$.

\end{lemma}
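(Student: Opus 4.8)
The plan is to prove the bound $|\K_i|\le k^i$ by induction on $i$, fixing a single member $E\in\F$ of size $k$ and analyzing how covers of $\F$ of size $i$ must interact with $E$. The base of this kind of argument is the simple observation that if $S$ is any cover of $\F$, then in particular $S\cap E\neq\emptyset$, so $S$ contains at least one of the $k$ vertices of $E$. I would then build the covers vertex by vertex: for each choice of a first vertex $x_1\in E$, I want to show there are at most $k^{i-1}$ minimal covers $S\in\K_i$ with $x_1\in S$, which would give $|\K_i|\le k\cdot k^{i-1}=k^i$.

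The key idea for the inductive step is that once we have chosen a partial set $\{x_1,\dots,x_j\}\subseteq S$ with $j<i$, this partial set is \emph{not} a cover of $\F$ (since $S$ is a minimal cover of size $i>j$), so there exists some $F_j\in\F$ disjoint from $\{x_1,\dots,x_j\}$; and since $S$ is a cover, $S$ must meet $F_j$, hence one of the remaining $i-j$ vertices of $S$ lies in $F_j$, a set of size $k$. This lets us extend the partial choice in at most $k$ ways at each step. Carrying this out carefully, one fixes a minimal cover $S=\{y_1,\dots,y_i\}\in\K_i$, and shows it is determined by a sequence of choices: $y_1\in E$ ($k$ options), then picking $F_1\in\F$ disjoint from $\{y_1\}$ (whose existence uses that a single vertex is not a cover — valid since $\F$ is non-trivial, though actually one just needs $\{y_1\}\notin\K$, which holds as $i\ge 3>1$), then $y_2\in F_1$ ($k$ options, but we should be a bit careful that the new vertex is genuinely in $F_1$ — reorder $S$ so that this holds), and so on. After $i$ rounds every vertex of $S$ has been specified, giving at most $k^i$ minimal covers of size $i$. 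One subtlety: the ordering $y_1,\dots,y_i$ of $S$ is not canonical, so strictly the count $k^i$ over-counts, which is harmless since we only want an upper bound; but I must make sure that \emph{every} $S\in\K_i$ does arise from \emph{some} such sequence of choices, which is where minimality of $S$ is used at each step (a proper subset of $S$ is never a cover, so the relevant $F_j$ always exists).

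Let me make the inductive extraction precise. Given $S\in\K_i$ with $3\le i\le k$, I claim one can list its elements as $y_1,\dots,y_i$ and find $F_0:=E, F_1,\dots,F_{i-1}\in\F$ such that $y_1\in F_0$ and for each $1\le j\le i-1$ we have $F_j\cap\{y_1,\dots,y_j\}=\emptyset$ and $y_{j+1}\in F_j$. The existence of $F_j$ for $j\ge 1$: since $i>j$, the set $\{y_1,\dots,y_j\}\subsetneq S$ is not a cover of $\F$, so some $F_j\in\F$ avoids it; since $S$ is a cover, $S\cap F_j\neq\emptyset$, and as $F_j$ misses $y_1,\dots,y_j$, it contains some $y_{j+1}\in\{y_{j+1},\dots,y_i\}$ — relabel so this is $y_{j+1}$. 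This produces the desired list. Hence the map sending $S$ to the tuple $(y_1,F_1,y_2,F_2,\dots,F_{i-1},y_i)$ — more economically, just to $(y_1,y_2,\dots,y_i)$ obtained this way — is well-defined, and at round $j$ there are at most $k$ choices for $y_j$ given $F_{j-1}$ (namely the $k$ vertices of $F_{j-1}$), so $|\K_i|\le k^i$.

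The step I expect to be the main (minor) obstacle is bookkeeping the relabeling and making sure the argument does not implicitly assume $\F$ is non-trivial or use the full strength of $\K_1=\emptyset$ — all we need is $3\le i\le k$ so that any $\{y_1,\dots,y_j\}$ with $j<i$ is a proper subset of the minimal cover $S$ and hence not a cover, guaranteeing the escaping edge $F_j$. A secondary point worth a line: we need $F_j\in\F$ to have size exactly $k$ (it does, $\F\subseteq\binom{[n]}{k}$) so that "one of $\le k$ vertices" is honest. No appeal to $n\ge 2k$ or maximality of $\F$ is required for this lemma; it is purely combinatorial. The constant $k^i$ is certainly not optimal (for $i=k$ one could quote the sharper Erd\H os--Lov\'asz bound), but it suffices for the applications in the proof of Theorem~\ref{thm:main}.
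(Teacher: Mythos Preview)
Your argument is correct and follows the classical Erd\H os--Lov\'asz branching (or ``encoding'') method directly, whereas the paper instead invokes a lemma of H\aa stad--Jukna--Pudl\'ak as a black box: if an $i$-uniform family has more than $k^i$ sets then some link $(\K_i)_Y$ has cover number exceeding $k$; applied to $\K_i$, such a $Y$ is a proper subset of a minimal cover and hence not a cover of $\F$, so some $F\in\F$ avoids $Y$, yet every $K\in\K_i$ meets $F$, forcing $\tau((\K_i)_Y)\le |F|=k$, a contradiction. Your route is more elementary and self-contained, and makes it transparent where the exponent $i$ comes from; the paper's is shorter once the cited lemma is granted, and in fact the proof of that lemma is essentially the branching argument you wrote out. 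One presentational point worth tightening: for the count $k^i$ to bound the number of sequences, the witness $F_j$ at each stage must be fixed as a function of the partial sequence $(y_1,\dots,y_j)$ alone, not chosen adaptively based on the target $S$; once such a choice function is fixed, the decision tree has branching factor $\le k$ and depth $i$, and your surjectivity claim (every $S\in\K_i$ is reached along some root-to-leaf path) gives the bound.
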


In order to prove Lemma~\ref{lem:Frankl}, We use a result of H{\aa}stad, Jukna, and Pudl{\'a}k \cite[Lemma 3.4]{Hastad95}. 
Given a family $\F$, the \emph{cover number} of $\F$, denoted by $\tau(\F)$, is the size of the smallest cover of $\F$.

\begin{lemma}\cite{Hastad95}\label{lem:flower}
If $\F$ is an $i$-uniform family with $|\F| > k^i$, then there exists a set $Y$ such that $\tau(\F_Y) \ge k+1$, where $\F_Y := \{F\setminus Y: F\in \F, F\supseteq Y\}$.
\end{lemma}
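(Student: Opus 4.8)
The plan is to prove the statement exactly as written, by induction on the uniformity $i\ge 1$. The guiding idea is simple: if $\F$ itself does not already have cover number at least $k+1$, then it has a cover $C$ of size at most $k$, and since every member of $\F$ meets $C$, some vertex $x\in C$ lies in at least a $1/k$ fraction of the members of $\F$; restricting to the link at $x$ then lowers the uniformity by one while shrinking the family by only a factor $k$. Iterating this until the cover number exceeds $k$ produces the desired set $Y$.

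For the base case $i=1$, take $Y=\emptyset$, so that $\F_{\emptyset}=\F$. Any cover of a $1$-uniform family must contain each of its singletons, so $\tau(\F)=|\F|>k$, i.e. $\tau(\F_{\emptyset})\ge k+1$.

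For the inductive step, let $i\ge 2$, assume the statement for $i-1$, and let $\F$ be $i$-uniform with $|\F|>k^i$. If $\tau(\F)=\tau(\F_{\emptyset})\ge k+1$ we are done with $Y=\emptyset$, so assume $\F$ has a cover $C$ with $|C|\le k$. Then $\F=\bigcup_{x\in C}\F(x)$, and by averaging there is $x\in C$ with $|\F(x)|\ge |\F|/|C|>k^i/k=k^{i-1}$. Put $\G:=\{F\setminus\{x\}:F\in\F(x)\}$, an $(i-1)$-uniform family with $|\G|=|\F(x)|>k^{i-1}$ in which $x$ appears in no member. By the inductive hypothesis there is a set $Y'$ with $\tau(\G_{Y'})\ge k+1$, and since $x$ lies in no member of $\G$ we may take $x\notin Y'$.

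Now set $Y:=Y'\cup\{x\}$. For $F\in\F$ one has $F\supseteq Y$ if and only if $x\in F$ and $F\setminus\{x\}\supseteq Y'$, i.e. if and only if $F\setminus\{x\}\in\G$ with $F\setminus\{x\}\supseteq Y'$; in that case $F\setminus Y=(F\setminus\{x\})\setminus Y'$. Hence $\F_Y=\G_{Y'}$, so $\tau(\F_Y)=\tau(\G_{Y'})\ge k+1$, closing the induction. The argument is elementary; the only points needing care are the pigeonhole observation that, once $\tau(\F)\le k$, passing from $\F$ to the link at a single vertex costs merely a factor $k$ in size, and the bookkeeping identity $\F_{Y'\cup\{x\}}=\G_{Y'}$ relating the two restriction operations, which is precisely what lets the induction close. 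I do not expect a genuine obstacle.
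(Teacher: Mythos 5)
Your argument is correct; note that the paper does not prove Lemma~\ref{lem:flower} at all---it quotes it from H{\aa}stad--Jukna--Pudl\'ak---and your induction on $i$ (pass to the link of a popular vertex of a cover of size at most $k$, apply the hypothesis there, then lift the core back via $\F_{Y'\cup\{x\}}=\G_{Y'}$) is precisely the standard proof of that cited lemma. The only point worth noting is that ``we may take $x\notin Y'$'' is in fact automatic: $\tau(\G_{Y'})\ge k+1$ forces $\G_{Y'}\neq\emptyset$, so some member of $\G$ contains $Y'$, and no member of $\G$ contains $x$.
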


\begin{proof}[Proof of Lemma~\ref{lem:Frankl}]
Suppose $|\K_i| > k^i$ for some $3\le i\le k$.
Then by Lemma~\ref{lem:flower}, there exists a set $Y$ such that $\tau((\K_i)_Y) \ge k+1$. In particular, $(\K_i)_Y$ is nonempty, namely, there exists $K\in \K_i$ such that $Y\subsetneq K$. By the definition of $\K$, this implies that $Y$ is not a cover of $\F$, so there exists $F\in \F$ such that $F\cap Y=\emptyset$.
Since each member of $\K_i$ is a cover of $\F$, each of them intersects $F$.
This implies that $\tau((\K_i)_Y)\le |F|=k$, a contradiction.
\end{proof}

\section{Proof of Theorem~\ref{lem_three_sets}}

In this section we derive Theorem~\ref{lem_three_sets} from Theorem~\ref{thm:Frankl96}.

\begin{proof}[Proof of Theorem~\ref{lem_three_sets}]
Let $\mathcal{F}_r$ consist of all the subsets of $\mathcal{F}$ that intersect with $T_1 \cup T_2 \cup T_3$ in exactly $r$ elements. 
Then $\mathcal{F}=\mathcal{F}_3 \cup \mathcal{F}_4 \cup \cdots \cup \mathcal{F}_k$.
Let $X_1=T_1$, $X_2=T_2$, $X_3=T_3$, $X_4=[m] \setminus (T_1 \cup T_2 \cup T_3)$, and $k_1=k_2=k_3=1$, $k_4=k-3$. Since $m\ge k\ell$, we have $1/\ell \ge (k-3)/(m-3\ell)$. Since $\ell\ge 2$, we can apply Theorem~\ref{thm:Frankl96} to conclude that 
\[
|\mathcal{F}_3| \le \ell^3 \binom{m-3\ell}{k-3} \cdot \frac{1}{\ell} = \ell^2 \binom{m-3\ell}{k-3}.
\]

Note that a set $S \in \mathcal{F}_4$ intersects $T_1, T_2, T_3$ with either $1, 1, 2$ or $1,2,1$ or $2,1,1$ elements. We partition $\mathcal{F}_4$ into three subfamilies accordingly. Our assumption implies 
\[
\frac{k-4}{m- 3\ell} \le \frac2{\ell}\le \frac12.
\]
We can apply Theorem~\ref{thm:Frankl96} to each subfamily of $\mathcal{F}_4$ and obtain that
\[
|\mathcal{F}_4| \le 3\binom{\ell}{2}\ell^2 \binom{m-3\ell}{k-4} \cdot \frac{2}{\ell}  = 3 (\ell-1) \ell^2 \binom{m-3\ell}{k-4}.
\]

Finally, for $5 \le r \le k$, we claim that $|\mathcal{F}_r| \le \ell^2 \binom{3\ell-3}{r-3} \binom{m-3\ell}{k-r}$.
Indeed, let $X_1=T_1 \cup T_2 \cup T_3$, $X_2=[m] \setminus X_1$, $k_1=r$ and $k_2=k-r$. Note that $|X_2|=m-3\ell \ge 2(k-r)$ and $r/(3\ell) \ge (k-r)/(m-3\ell)$.
If $|X_1|=3\ell \ge 2r$, then Theorem~\ref{thm:Frankl96} gives that
\[
|\mathcal{F}_r| \le  \binom{3\ell-1}{r-1} \binom{m-3\ell}{k-r} < \ell^2 \binom{3\ell-3}{r-3} \binom{m-3\ell}{k-r}.
\]
When $3\ell\le 2r$, we have $r\ge 6$ because $\ell \ge 4$. Hence,
\[
 \binom{3\ell}{r} < \frac{(3\ell)^3}{r(r-1)(r-2)} \binom{3\ell-3}{r-3} \le \frac{18\ell^2}{(r-1)(r-2)} \binom{3\ell-3}{r-3} < \ell^2 \binom{3\ell-3}{r-3},
\]
and the trivial bound on $|\F_r|$ gives that
\[
|\mathcal{F}_r| \le  \binom{3\ell}{r} \binom{m-3\ell}{k-r} < \ell^2 \binom{3\ell-3}{r-3} \binom{m-3\ell}{k-r}
\]
as claimed. Summing up the bounds for $|\F_3|, |\F_4|$ and $|\F_r|$ for $r\ge 5$, we have 
\begin{align*}
|\mathcal{F}| &=|\mathcal{F}_3|+|\mathcal{F}_4| + \sum_{r=5}^{k}  |\mathcal{F}_r|\\
& \le \ell^2 \binom{m-3\ell}{k-3} + 3(\ell-1)\ell^2 \binom{m-3\ell}{k-4} + \ell^2 \sum_{r=5}^k \binom{3\ell-3}{r-3} \binom{m-3\ell}{k-r} = \ell^2 \binom{m-3}{k-3},
\end{align*}
because $\binom{m-3}{k-3} = \sum_{i=0}^{k-3} \binom{m-3l}{k-3-i} \binom{3l-3}{i}$.
\end{proof}

\section{Proof of Theorem~\ref{thm:main}}

We start with some simple estimates.
First, for $n\ge c k^2$, $c\ge 1$ and $1\le t\le k-1$, we have
\begin{align}
\frac{\binom{n-2k+t-1}{k-2}}{\binom{n-t-1}{k-2}} &= \frac{ (n-2k+t-1) \cdots (n-3k+t+2) }{ (n-t-1) \cdots (n-t-k+2) } \ge \left(1 - \frac{2k-2t}{n-t-k+2} \right)^{k-2} \nonumber \\
& \ge 1 - \frac{ 2(k-t)(k-2) }{n -t -k+2} \ge \frac{c-2}{c}. \label{eq:k3}
\end{align}
Similarly, one can show that $\binom{n-k-2}{k-3} \ge \frac{c-1}{c} \binom{n-3}{k-3}$.
Second, if $\delta(\F) > \binom{n-2}{k-2} - \binom{n- k-2}{k-2}$, then we have
\begin{align}
|\F| &> \frac{n}{k}\left( \binom{n-2}{k-2} - \binom{n- k-2}{k-2} \right)> n\binom{n-k-2}{k-3} \nonumber \\
&\ge \frac{(c-1)n}{c} \binom{n-3}{k-3} > \frac{(c-1)}c (k-2) \binom{n-2}{k-2}. \label{eq:deg}
\end{align}

\begin{lemma}\label{lem:F2}
Suppose $k\ge 4$ and $n\ge 4 k^2$, $\F\subseteq \binom{[n]}{k}$ is a non-trivial intersecting family such that $\delta(\F)> \delta(\HM_{n, k}) = \binom{n-2}{k-2} - \binom{n- k-2}{k-2}$.
Then for any $u\in [n]$,
\begin{enumerate}
\item[($i$)] there exists $E, E'\in \F$ such that $u\notin E\cup E'$ and $|E\cap E'|=1$;
\item[($ii$)] $|\F\setminus \F(u)| > \frac{k-2}{2} \binom{n-2}{k-2}$.
\end{enumerate}
\end{lemma}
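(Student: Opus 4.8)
The plan is to prove both parts by exploiting the non-triviality of $\F$ together with the minimum degree hypothesis, which via \eqref{eq:deg} forces $\F$ to be quite large. For part ($i$), fix $u\in[n]$ and consider the "link away from $u$," i.e. the family $\F\setminus\F(u)$ of members avoiding $u$. Since $\F$ is non-trivial, $u$ is not in every member, so $\F\setminus\F(u)\ne\emptyset$; pick any $E$ in it. Now I claim some $E'\in\F$ with $u\notin E'$ meets $E$ in exactly one point. If not, then every member of $\F$ avoiding $u$ meets $E$ in at least two points, and every member containing $u$ meets $E$ in at least one point; I would bound $|\F|$ from above under this assumption. The count of $k$-sets containing $u$ and meeting $E$ is at most $\binom{n-2}{k-2}$ (choose $u$ and one further point of $E\setminus\{u\}$... more precisely $k\binom{n-2}{k-2}$ is too crude; better: at most $\binom{n-1}{k-1}$ trivially, but we need sharper). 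The count of $k$-sets avoiding $u$ and meeting $E$ in $\ge 2$ points is at most $\binom{k}{2}\binom{n-1-k}{k-2}$, which is $O(\binom{n-3}{k-3})$ up to a $k^2$ factor, hence much smaller than the lower bound in \eqref{eq:deg} when $n\ge 4k^2$. Adding these two bounds should contradict $|\F|>\frac{c-1}{c}(k-2)\binom{n-2}{k-2}$, provided I first handle the members containing $u$ carefully: I should instead split on $E$ directly, noting every member of $\F$ meets $E$, and if no member avoiding $u$ meets $E$ in exactly one point then all members meeting $E$ in exactly one point contain $u$; bounding $d_\F(x)$ appropriately or just bounding the total gives the contradiction.

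For part ($ii$), I want $|\F\setminus\F(u)|>\frac{k-2}{2}\binom{n-2}{k-2}$. Write $|\F|=d_\F(u)+|\F\setminus\F(u)|$. By \eqref{eq:deg}, $|\F|>\frac{c-1}{c}(k-2)\binom{n-2}{k-2}$, so it suffices to show $d_\F(u)$ is not too large, specifically $d_\F(u)\le\binom{n-2}{k-2}$ plus a manageable error, so that the leftover exceeds $\frac{k-2}{2}\binom{n-2}{k-2}$. The bound $d_\F(u)\le\binom{n-2}{k-2}$ is exactly where part ($i$) enters: using $E,E'\in\F$ with $u\notin E\cup E'$ and $|E\cap E'|=1$, every member of $\F$ containing $u$ must meet both $E$ and $E'$. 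Since $E\cup E'$ has $2k-1$ elements, a member $F\ni u$ with $F\cap(E\cup E')$ contained in the single common point would have to avoid all of $(E\setminus E')\cup(E'\setminus E)$ — but it must hit $E$ and hit $E'$, so either it contains the common point, or it contains at least one point of $E\setminus E'$ and one of $E'\setminus E$. A routine count then gives $d_\F(u)\le \binom{n-2}{k-2} + (k-1)^2\binom{n-3}{k-3} \le \binom{n-2}{k-2} + O(k^2)\binom{n-3}{k-3}$, and since $k^2\binom{n-3}{k-3}\le\frac{k^2}{n}\binom{n-2}{k-2}$ is a small fraction of $\binom{n-2}{k-2}$ when $n\ge 4k^2$, we get $d_\F(u)\le(1+o(1))\binom{n-2}{k-2}$. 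Combining with \eqref{eq:deg} and taking $c\ge 4$ (so $\frac{c-1}{c}(k-2)-1-o(1)\ge\frac{k-2}{2}$ for $k\ge4$) yields ($ii$).

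The main obstacle I anticipate is getting the constant in the upper bound on $d_\F(u)$ tight enough: the naive bound $d_\F(u)\le\binom{n-1}{k-1}$ is useless here since $\binom{n-1}{k-1}\approx\frac{n}{k}\binom{n-2}{k-2}$ dwarfs everything, so I genuinely need the structural input from part ($i$) — that $\F(u)$ must meet two almost-disjoint edges — to pin $d_\F(u)$ down to roughly $\binom{n-2}{k-2}$. Care is needed in the case analysis for how $F\ni u$ intersects $E\cup E'$ (the common point versus one point from each side) and in verifying that the error terms, each of the form $k^{O(1)}\binom{n-3}{k-3}$, are indeed absorbed once $n\ge 4k^2$; the estimates \eqref{eq:k3} and the inequality $\binom{n-k-2}{k-3}\ge\frac{c-1}{c}\binom{n-3}{k-3}$ already recorded just before the lemma are exactly the tools for this, so the calculation should be routine once the combinatorial structure is set up correctly.
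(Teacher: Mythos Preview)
Your approach has genuine gaps in both parts.

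For part~($i$), your key estimate is wrong. The number of $k$-sets avoiding $u$ and meeting the fixed $E$ in at least two points is \emph{not} $O\big(k^2\binom{n-3}{k-3}\big)$; the dominant term $\binom{k}{2}\binom{n-1-k}{k-2}$ is of order $k^2\binom{n-2}{k-2}$, since $\binom{n-1-k}{k-2}$ is a $(k-2)$-binomial, not a $(k-3)$-binomial. This is already larger than the lower bound $\frac{c-1}{c}(k-2)\binom{n-2}{k-2}$ from~\eqref{eq:deg}, so no contradiction arises. Moreover you have no usable upper bound on $|\F(u)|$ at this stage (you only obtain one \emph{after} proving~($i$)), so bounding $|\F|=|\F(u)|+|\F\setminus\F(u)|$ from above cannot get off the ground.

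For part~($ii$), the arithmetic fails for small $k$. With $c=4$ you need $\frac{3}{4}(k-2)-1-o(1)\ge\frac{k-2}{2}$, i.e.\ $\frac{k-2}{4}\ge 1+o(1)$, which requires $k\ge 7$; for $k=4,5,6$ the bound $d_\F(u)\le\binom{n-2}{k-2}+(k-1)^2\binom{n-3}{k-3}$ combined with~\eqref{eq:deg} simply does not yield $|\F\setminus\F(u)|>\frac{k-2}{2}\binom{n-2}{k-2}$.

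The paper proceeds quite differently and proves ($i$) and ($ii$) together. Set $\F_2=\F\setminus\F(u)$ and let $t=\min_{E\ne E'\in\F_2}|E\cap E'|$, so $\F_2$ is $t$-intersecting on $[n]\setminus\{u\}$; the Frankl--Wilson theorem (Theorem~\ref{thm:Wilson}) gives $|\F_2|\le\binom{n-t-1}{k-t}$. Now pick $E,E'\in\F_2$ with $|E\cap E'|=t$ and apply the minimum-degree hypothesis \emph{locally} at a vertex $x\notin E\cup E'\cup\{u\}$: inclusion--exclusion bounds $|\F_1(x)|$ from above, hence $|\F_2(x)|$ from below, and averaging over such $x$ (each $F\in\F_2$ contributes at most $k-t$ times since $|F\cap E|\ge t$) gives $|\F_2|>(k-2)\binom{n-2k+t-1}{k-2}\ge\frac{k-2}{2}\binom{n-t-1}{k-2}$ by~\eqref{eq:k3}. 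For $t\ge 2$ this contradicts the Frankl--Wilson upper bound, forcing $t=1$; and then the same inequality with $t=1$ is exactly~($ii$). The point is that working with the degree condition at a single vertex $x$ outside $E\cup E'$, rather than with the global bound~\eqref{eq:deg}, is what makes the constants work for all $k\ge 4$.
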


\begin{proof}
Given $u\in [n]$, write $\F_1= \F(u)$ and $\F_2 = \F\setminus \F_1$. If $|\F_2|=1$, then $\F\subseteq \HM_{n, k}$, and thus $\delta(\F)\le \delta(\HM_{n, k})$, a contradiction. So assume that $|\F_2|\ge 2$.

Let $t=\min |E \cap E'|$ among all distinct $E, E' \in \F_2$. Obviously $1 \le t \le k-1$, and $\F_2$ is a $t$-intersecting family on $[2,n]$.
Then since $n> 4k^2 \ge  (k-t+1)(t+1)+1$, we get $|\F_2| \le \binom{n-t-1}{k-t}$ by Theorem~\ref{thm:Wilson}.
Note that there exist $E, E'\in \F_2$ such that $|E\cap E'| = t$.
Since every set in $\F_1$ must intersect both $E$ and $E'$, for every $x\not\in E\cup E' \cup \{u\}$, by the inclusion-exclusion principle, we have
\begin{equation}\label{eq:F1x}
    |\F_1(x)| \le \binom{n-2}{k-2} - 2\binom{n-k-2}{k-2} + \binom{n-2k+t-2}{k-2}.
\end{equation}
Let $X= [n]\setminus (E\cup E'\cup \{u\} )$ and thus $|X| = n - 1 - (2k-t)$. Suppose $x\in X$ attains the minimum degree in $\F_2$ among all elements of $X$.
Since $|\F(x)|=|\F_1(x)|+|\F_2(x)|> \delta(\mathcal{HM}_{n, k})$, by~\eqref{eq:F1x} we have
\begin{equation}\label{eq:F2x}
|\F_2(x)| > \binom{n-k-2}{k-2} - \binom{n-2k+t-2}{k-2}. \nonumber
\end{equation}
By the definition of $x$ we get
\begin{align*}
|\F_2| &> \frac{|X|}{k-t} \left( \binom{n-k-2}{k-2} - \binom{n-2k+t-2}{k-2} \right) \ge \frac{|X|(k-t)}{k-t} \binom{n-2k+t-2}{k-3} \\
& = (k-2) \binom{n-2k+t-1}{k-2} ,
\end{align*}
where the factor $k-t$ comes from the fact that every member $F\in \F_2$ is counted at most $k-t$ times -- because $|F\cap E_1| \ge t$.
By~\eqref{eq:k3} with $c=4$ and $k\ge 4$, we get
\begin{equation*}\label{eq:F2}
|\F_2| > \frac{k-2}{2} \binom{n-t-1}{k-2} \ge \binom{n-t-1}{k-2},
\end{equation*}
which, together with $|\F_2| \le \binom{n-t-1}{k-t}$, implies that $t=1$, so ($i$) holds.
Since $t=1$, the first inequality above gives ($ii$).
\end{proof}

\begin{proof}[Proof of Theorem~\ref{thm:main}]

First assume that $k\ge 6$ and $n\ge 4 k^2$.
Suppose $\F\subseteq \binom{[n]}{k}$ is a non-trivial intersecting family such that $\delta(\F)> \delta(\HM_{n, k}) = \binom{n-2}{k-2} - \binom{n- k-2}{k-2}$.
Suppose $u\in [n]$ attains the maximum degree of $\F$ and write $\F': = \F\setminus \F(u)$.
If $|\F(u)|> \binom{n-1}{k-1} - \binom{n-3}{k-1}$, then by Theorem~\ref{thm:Frankl13}, the $(k-1)$-uniform family $\{E\setminus \{u\}: E\in \F(u)\}$ contains a matching $\M= \{ T_1, T_2, T_3 \}$ of size $3$. Every member of $\F'$ must intersect each of $T_1, T_2, T_3$.
By Theorem~\ref{lem_three_sets}, we have $|\F'| \le (k-1)^2\binom{n-4}{k-3}$. On the other hand, Lemma~\ref{lem:F2} Part~($ii$) implies that $|\F'| > \frac{k-2}{2} \binom{n-2}{k-2} = \frac{n-2}{2}\binom{n-3}{k-3} > 2(k-1)^2\binom{n-3}{k-3}$ because $n\ge 4k^2 \ge 4(k-1)^2+2$. This gives a contradiction.

We thus assume that $|\Delta(\F)|\le \binom{n-1}{k-1} - \binom{n-3}{k-1}$. By Theorem~\ref{thm:Frankl}, 
\[
|\F|\le \binom{n-1}{k-1} - \binom{n-3}{k-1} + \binom{n-3}{k-2} = \frac{3n - 2k -2}{n-2} \binom{n-2}{k-2} \le 3\binom{n-2}{k-2}.
\]
Since $\delta(\F) > \binom{n-2}{k-2} - \binom{n- k-2}{k-2}$, by~\eqref{eq:deg}, we have $|\F| > \frac{3}{4} (k-2) \binom{n-2}{k-2}$.
The upper and lower bounds for $|\F|$ together imply $k<6$, a contradiction.

\medskip

Now assume that $k=4,5$ and $n\ge 30 k^2$.
Since $\F$ is intersecting, each member of $\F$ is a cover of $\F$ and thus contains as a subset a minimal cover, which is a member of the kernel $\K$.
Thus $|\F|\le \sum_{i=1}^k |\K_i| \binom{n-i}{k-i}$. We know $\K_1= \emptyset$ because $\F$ is non-trivial. 
We observe that $|\K_2|\le 1$ -- otherwise assume $u v, u v'\in \K_2$ (recall that $\K_2$ is intersecting).
By the definition of $\K_2$, every $E\in  \F\setminus \F(u)$ contains both $v$ and $v'$ so every $E, E'\in  \F\setminus \F(u)$ satisfy that $|E\cap E'|\ge 2$, contradicting Lemma~\ref{lem:F2} Part~($i$).
By Lemma~\ref{lem:Frankl}, 
\[
 |\F| \le \binom{n-2}{k-2} + \sum_{i=3}^k k^i \binom{n-i}{k-i}.
\]
Since $n\ge 30 k^2$, for any $3\le i\le k$, we have
\[
k^{i-2} \binom{n-i}{k-i} = \binom{n-2}{k-2} \cdot k^{i-2} \cdot \frac{k-2}{n-2} \cdot \frac{k-3}{n-3} \cdots \frac{k-i+1}{n-i+1} \le \binom{n-2}{k-2} \frac1{30^{i-2}}.
\]
Thus
\[
 |\F| \le \binom{n-2}{k-2} + k^2\binom{n-2}{k-2}\sum_{i=3}^k \frac1{30^{i-2}}\le \binom{n-2}{k-2} \left(1 + \frac{k^2}{29}\right).
\]
On the other hand, by~\eqref{eq:deg}, we have $|\F| > \frac{29}{30} (k-2) \binom{n-2}{k-2} > \frac{28}{29} (k-2) \binom{n-2}{k-2}$.
Hence, $28(k-2)< 29 + k^2$, contradicting $4\le k\le 5$.
This completes the proof of Theorem~\ref{thm:main}.
\end{proof}

\section{Concluding Remarks}

The main question arising from our work is whether Theorem~\ref{thm:main} holds for all $n\ge 2k+1$. Proposition~\ref{thm:k=3} confirms this for $k=3$. 
Another question is whether the following generalization of Theorems~\ref{lem_three_sets} and \ref{thm:Frankl96} is true.
We say a family $\mathcal{H}$ of sets has the \emph{EKR property} if the largest intersecting subfamily of $\mathcal{H}$ is trivial.

\begin{conjecture} \label{prob1} 
Suppose $n=n_1+\cdots+n_d$ and $k \ge k_1 + \cdots + k_d$, where $n_i> k_i\ge 0$ are integers.  Let $X_1\cup \cdots \cup X_d$ be a partition of $[n]$ with $|X_i|=n_i$, and
\[
\mathcal{H}:=\left\{F \subseteq \binom{[n]}{k}: |F \cap X_i| \ge k_i~\textrm{for~}i=1, \dots, d \right\}. 
\]
If $n_i \ge 2k_i$ for all $i$ and $n_i > k-\sum_{j=1}^d k_j+k_i$ for all but at most one $i\in [d]$ such that $k_i > 0$, then $\mathcal{H}$ has the EKR property.
\end{conjecture}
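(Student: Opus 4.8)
The plan is to reduce the conjecture to Theorem~\ref{thm:Frankl96} by a layering argument analogous to the proof of Theorem~\ref{lem_three_sets}. Let $\F\subseteq\mathcal{H}$ be an intersecting subfamily; we must show $|\F|\le \Delta(\mathcal{H})$, the degree of a vertex in some $X_i$ with $k_i$ maximal relative to $n_i$, and that equality forces a full star. Write $k_0 := k - \sum_{j=1}^d k_j \ge 0$ for the ``slack''. For a vector $\mathbf{r}=(r_1,\dots,r_d)$ with $r_i\ge k_i$ and $\sum r_i = k$, let $\F_{\mathbf{r}}$ consist of those $F\in\F$ with $|F\cap X_i| = r_i$ for every $i$. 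Then $\F = \bigcup_{\mathbf{r}} \F_{\mathbf{r}}$, and each $\F_{\mathbf{r}}$ sits inside a direct-product family of the type handled by Theorem~\ref{thm:Frankl96}, provided $n_i\ge 2r_i$ for all $i$. The idea is to estimate each $|\F_{\mathbf{r}}|$ by $\frac{r_{i(\mathbf{r})}}{n_{i(\mathbf{r})}}\prod_i\binom{n_i}{r_i}$ where $i(\mathbf{r})$ maximizes $r_i/n_i$, sum over $\mathbf{r}$, and check that the total matches $\Delta(\mathcal{H})$ exactly, using a Vandermonde-type identity as at the end of the proof of Theorem~\ref{lem_three_sets}.

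First I would handle the ``exceptional coordinate''. By hypothesis there is at most one index $i_0$ with $k_{i_0}>0$ and $n_{i_0}\le k - \sum_j k_j + k_{i_0} = k_0 + k_{i_0}$; for every other $i$ with $k_i>0$ we have $n_i > k_0 + k_i \ge r_i$ whenever $r_i \le k_i + k_0$. I expect the bound $r_i/n_i$ to be dominated, for all relevant $\mathbf{r}$, by the coordinate where the ratio is largest in $\mathcal{H}$ itself, so the per-layer estimates all point to the same full star; this is what makes the summation telescope. The condition $n_i\ge 2k_i$ only guarantees $n_i\ge 2r_i$ when $r_i$ does not exceed $k_i$, so for layers where some $r_i > k_i$ one must instead merge coordinates: group the ``overfull'' parts of $[n]$ together with $X_{i_0}$ (or bound those layers by the trivial estimate $\binom{n_i}{r_i}$ when $2r_i > n_i$, exactly as the $3\ell\le 2r$ case is treated in Section~3), and apply Theorem~\ref{thm:Frankl96} with a coarser partition. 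Here one needs the arithmetic inequality $r/n_{\text{merged}} \ge (k-r)/(n-n_{\text{merged}})$ to license the application, which follows from $n_i\ge 2k_i$ together with $k_0$ being absorbed into the merged block.

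The main obstacle, I expect, is bookkeeping the equality case and the merging when $k_0>0$: unlike Theorem~\ref{lem_three_sets}, where a constant-factor loss was acceptable, here the bound must be \emph{tight}, so every layer $\F_{\mathbf{r}}$ must be estimated by exactly $\frac{r^\ast}{n^\ast}\prod\binom{n_i}{r_i}$ with $r^\ast/n^\ast$ the \emph{global} maximizing ratio of $\mathcal{H}$ — and one must verify these sum to $\Delta(\mathcal{H})$ via
\[
\binom{n-1}{k-1} = \sum_{\mathbf{r}} \binom{n_{i^\ast}-1}{r_{i^\ast}-1}\prod_{i\ne i^\ast}\binom{n_i}{r_i},
\]
which is the natural Vandermonde decomposition of $\Delta(\mathcal{H})$ but requires the set of admissible $\mathbf{r}$ to be precisely $\{\mathbf{r}: r_i\ge k_i,\ \sum r_i = k\}$. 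The delicate point is whether a layer with some $r_i/n_i$ strictly exceeding the global maximum ratio can occur; the constraint ``$n_i > k_0 + k_i$ for all but one $i$'' is exactly designed to prevent this, and I would prove that no such layer arises, then finish by the direct-product EKR theorem applied layer by layer, tracing equality back to a single full star to obtain the EKR property.
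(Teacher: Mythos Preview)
The statement you are attempting to prove is presented in the paper as an \emph{open conjecture}, not a theorem; the paper offers no proof of it and only records special cases (the case $k=\sum k_j$ via Theorem~\ref{thm:Frankl96}, the $d=2$ case of Katona for large $n_i$, and the case $k_1=\cdots=k_d=1$ with large $n$ in Theorem~\ref{thm:F}, whose proof is omitted). So there is no ``paper's own proof'' to compare against, and your proposal is an attack on an open problem.

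On the substance of your plan: the layer-by-layer application of Theorem~\ref{thm:Frankl96} cannot, in general, recover the exact bound $\Delta(\mathcal{H})$. The reason is that applying the direct-product EKR separately to each profile $\mathbf{r}$ discards all cross-intersection constraints between layers, and the maximizing coordinate $i(\mathbf{r})$ genuinely varies with $\mathbf{r}$. Concretely, take $d=2$, $k_1=k_2=1$, $k=3$, $n_1=n_2=3$ (so $k_0=1$; the hypotheses $n_i\ge 2k_i$ and $n_i>k_0+k_i$ both hold). The admissible profiles are $(2,1)$ and $(1,2)$, each with $\prod_j\binom{n_j}{r_j}=9$ and $\max_i r_i/n_i=2/3$, so your layered bound is $6+6=12$. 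But $\Delta(\mathcal{H})=9$ (the degree of any vertex). Thus the identity you hope for,
\[
\sum_{\mathbf{r}}\;\max_i\frac{r_i}{n_i}\prod_j\binom{n_j}{r_j}\;=\;\Delta(\mathcal{H}),
\]
fails already in this small instance, and your assertion that the hypothesis ``$n_i>k_0+k_i$ for all but one $i$'' is ``exactly designed to prevent'' a layer-dependent maximizer is incorrect. The merging trick you borrow from Section~3 gives only an asymptotic bound in the paper's own application; it cannot be tightened to an exact equality without some genuinely new ingredient that exploits intersections \emph{across} profiles.
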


The assumptions on $n_i$ cannot be relaxed for the following reasons. 
If $n_i < 2k_i$ for some $i$, then $\mathcal{H}$ itself is intersecting and $|\mathcal{H}(x)| < |\mathcal{H}|$ for any $x\in [n]$.
If $n_i\le k-\sum_{j=1}^d k_j+k_i$ for distinct $i_1, i_2$ such that $k_{i_1}, k_{i_2}>0$, then for any $x\in [n]$, the union of $\mathcal{H}(x)$ and $\{F\in \mathcal{H}: X_{i_1}\subseteq F \text{ or } X_{i_2}\subseteq F\}$ is a larger intersecting family than $\mathcal{H}(x)$.

When $k= k_1 + \cdots + k_d$, Conjecture~\ref{prob1} follows from Theorem~\ref{thm:Frankl96}, in particular, the $d=1$ case is the EKR theorem. 
A recent result of Katona~\cite{Katona17} confirms Conjecture~\ref{prob1} for the case $d=2$ and $n_1, n_2 \ge 9(k-\min\{k_1, k_2\})^2$.
We can prove Conjecture~\ref{prob1} in the following case. 

\begin{theorem} \label{thm:F}
Given positive integers $d\le k$, $2\le t_1\le t_2 \le \cdots \le t_d$ with $t_2\ge k-d+2$,
there exists $n_0$ such that the followings holds for all $n\ge n_0$.
If $T_1, \dots, T_d$ are disjoint subsets of $[n]$ such that $|T_i|= t_i$ for all $i$,
then 
\[
\mathcal{H}:=\left\{F \subseteq \binom{[n]}{k}: |F \cap T_i| \ge 1~\textrm{for~}i=1, \dots, d \right\}
\]
has the EKR property.
\end{theorem}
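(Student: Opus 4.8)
The plan is to show that the unique (up to isomorphism) largest intersecting subfamily of $\mathcal{H}$ is a full star, by combining a weight/counting argument with a reduction to Theorem~\ref{lem_three_sets}. Fix an intersecting subfamily $\mathcal{F} \subseteq \mathcal{H}$ and suppose first that $\mathcal{F}$ is non-trivial; the goal is to bound $|\mathcal{F}|$ strictly below $|\mathcal{H}(x)|$ for any $x$. The first step is to record that a full star centered at a vertex $x \in T_1$ (the smallest $T_i$) has size $|\mathcal{H}(x)| = (1+o(1))\,t_1 t_2 \cdots t_d \binom{n-d-1}{k-d-1}$ when $t_1 = 1$, and more generally, since every $F \in \mathcal{H}$ must pick at least one point from each $T_i$, we have $|\mathcal{H}| = \Theta\!\big(\prod_i t_i \cdot \binom{n-d}{k-d}\big)$ with the dominant contribution coming from sets meeting each $T_i$ in exactly one point; a star centered in $T_i$ keeps a $t_i/(\text{something})$-fraction of these, so the largest trivial intersecting subfamily has roughly $\prod_{j} t_j \cdot \binom{n-d-1}{k-d-1}$ members, maximized by centering in $T_d$ but all comparable up to constants. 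So I need an upper bound on $|\mathcal{F}|$ for non-trivial intersecting $\mathcal{F}$ that beats $c\,\binom{n-d-1}{k-d-1}$ for a suitable constant $c$ depending only on $k$ and the $t_i$'s.

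Next I would split $\mathcal{F}$ by $\Delta(\mathcal{F})$, exactly as in the proof of Theorem~\ref{thm:main}. If some vertex $u$ has $d_{\mathcal{F}}(u) > \binom{n-1}{k-1} - \binom{n-3}{k-1}$, then by Theorem~\ref{thm:Frankl13} the link $\{E \setminus \{u\} : E \in \mathcal{F}(u)\}$ has a matching of size $3$, giving three pairwise disjoint $(k-1)$-sets $T_1', T_2', T_3'$, and every member of $\mathcal{F} \setminus \mathcal{F}(u)$ meets all three. Applying Theorem~\ref{lem_three_sets} with $\ell = k-1$ and $m = n$ gives $|\mathcal{F} \setminus \mathcal{F}(u)| \le (k-1)^2 \binom{n-3}{k-3}$, and $|\mathcal{F}(u)| \le \binom{n-1}{k-1}$, so $|\mathcal{F}| = O(\binom{n-1}{k-1})$ — but wait, this is $\Theta(\binom{n-1}{k-1})$, which is much larger than $\binom{n-d-1}{k-d-1}$ when $d \ge 2$. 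So in the large-degree case I instead need a finer argument: because $\mathcal{F} \subseteq \mathcal{H}$, the link of $u$ is supported on sets meeting every $T_i$ not containing $u$, which already forces $|\mathcal{F}(u)| = O(\prod_i t_i \binom{n-d-1}{k-d-1})$ when $u$ lies in at most one $T_i$; combined with the $\mathcal{H}$-membership constraint on $\mathcal{F} \setminus \mathcal{F}(u)$ and Theorem~\ref{lem_three_sets} applied to the $T_i'$ together with the original $T_i$'s, one should still get a bound of the right order. In the small-degree case $\Delta(\mathcal{F}) \le \binom{n-1}{k-1} - \binom{n-3}{k-1}$, Theorem~\ref{thm:Frankl} gives $|\mathcal{F}| = O(\binom{n-2}{k-2})$, and again I must multiply in the $\mathcal{H}$-restriction: every set counted already meets all $d$ of the $T_i$'s, which cuts each such bound by a factor $\Theta(\prod_i t_i / n^{d})$ times $n^{?}$ — concretely, running the Frankl argument inside $\mathcal{H}$ (or post-multiplying by the density of $\mathcal{H}$-sets among all $k$-sets through a fixed pair) yields $|\mathcal{F}| = O(\prod_i t_i \binom{n-d-2}{k-d-2} \cdot n)$, which for $n$ large is $o(\prod_i t_i \binom{n-d-1}{k-d-1})$, the size of the extremal star. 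The hypothesis $t_2 \ge k-d+2$ is what guarantees that a non-trivial $\mathcal{F}$ cannot simply be "$\mathcal{H}$ restricted to sets containing $T_1$ or $T_2$": that rival family is intersecting only when $t_{i_1}, t_{i_2}$ are both large, and the threshold $t_2 \ge k-d+2$ is exactly the boundary case ruled out in the remarks following Conjecture~\ref{prob1}.

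Then the endgame: having shown that every non-trivial intersecting $\mathcal{F} \subseteq \mathcal{H}$ has $|\mathcal{F}| < \max_x |\mathcal{H}(x)|$ for $n \ge n_0$, it follows that the largest intersecting subfamily of $\mathcal{H}$ is trivial, i.e. $\mathcal{H}$ has the EKR property. To pin down $n_0$ one tracks the constants: all the $O(\cdot)$ bounds above are of the form $C(k) \cdot \binom{n-j}{k-j}$ with $j \ge d$, while the extremal star has size $\Theta(\binom{n-d-1}{k-d-1})$, so any bound with $j \ge d+2$ (the small-degree case, after incorporating the $\mathcal{H}$-density) is automatically smaller once $n$ exceeds a constant depending on $k$; the large-degree case needs the $\mathcal{H}$-refinement to bring the exponent down from $k-1$ to $k-d-1$, and this is where I expect to spend the most effort.

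The main obstacle, as flagged, is the large-degree regime: naively Theorem~\ref{lem_three_sets} plus $|\mathcal{F}(u)| \le \binom{n-1}{k-1}$ is far too lossy when $d \ge 2$, because it ignores that $\mathcal{F} \subseteq \mathcal{H}$. The fix is to apply Theorem~\ref{lem_three_sets} not just to the three disjoint sets $T_1', T_2', T_3'$ coming from the matching in the link of $u$, but simultaneously to account for the $d$ original sets $T_1, \dots, T_d$ — every member of $\mathcal{F} \setminus \mathcal{F}(u)$ meets all of $T_1', T_2', T_3', T_1, \dots, T_d$. One needs a version of Theorem~\ref{lem_three_sets} (or its proof via Theorem~\ref{thm:Frankl96}) with more than three forced sets; fortunately the proof of Theorem~\ref{lem_three_sets} is itself just a product-partition application of Theorem~\ref{thm:Frankl96}, so re-running it with $d+3$ parts (or $d+2$, absorbing overlaps) gives $|\mathcal{F} \setminus \mathcal{F}(u)| = O\!\big(\prod_i t_i \cdot (k)^{O(1)} \binom{n-d-2}{k-d-2}\big)$, which is the desired order. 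Bounding $|\mathcal{F}(u)|$ itself in the large-degree case is easy once $u$ is known to lie in at most one $T_i$ (if $u$ lies in no $T_i$ the link must meet all $d$ sets, giving $O(\prod t_i \binom{n-d-1}{k-d-1})$; if $u \in T_{i_0}$ the link meets the other $d-1$ sets, giving $O(\prod_{i \ne i_0} t_i \binom{n-d}{k-d})$, still of the right order since we're comparing against a star centered in $T_{i_0}$). Assembling these, $|\mathcal{F}| = |\mathcal{F}(u)| + |\mathcal{F} \setminus \mathcal{F}(u)|$ is strictly below the extremal star size for $n$ large, completing the argument.
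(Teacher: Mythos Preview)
The paper explicitly omits the proof of Theorem~\ref{thm:F} (see the sentence immediately following its statement), so there is no paper proof to compare against.

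Evaluating your sketch on its own merits: the overall architecture (split on $\Delta(\mathcal F)$, invoke Theorems~\ref{thm:Frankl13} and~\ref{thm:Frankl}, and push a multi-block version of Theorem~\ref{lem_three_sets} through Theorem~\ref{thm:Frankl96}) is reasonable, but there are real gaps. First, your identification of the largest star is wrong: for $x\in T_i$ one has $|\mathcal H(x)|=(1+o(1))\bigl(\prod_{j\ne i}t_j\bigr)\binom{n-d}{k-d}$, which is a factor of order $n/t_i$ larger than your claimed $\prod_j t_j\binom{n-d-1}{k-d-1}$, and it is maximised at $x\in T_1$ (the \emph{smallest} block), not $T_d$. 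This propagates: all of your target bounds are set at the wrong scale. Second, and more seriously, in the large-degree case with $u\in T_1$ your own estimate is $|\mathcal F(u)|\le |\mathcal H(u)|$, which \emph{equals} the extremal star size; adding any positive $|\mathcal F\setminus\mathcal F(u)|$ then overshoots, so the desired strict inequality $|\mathcal F|<\max_x|\mathcal H(x)|$ does not follow. You need a genuinely sharper bound on $|\mathcal F(u)|$ here---for instance, using that $\mathcal F$ is non-trivial to produce some $F_0\in\mathcal F$ with $u\notin F_0$, so that every member of $\mathcal F(u)$ must also meet $F_0$, cutting the count by an extra factor of order $k/n$. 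Third, the hypothesis $t_2\ge k-d+2$ is only mentioned, not used: a correct proof must pinpoint exactly where it enters to rule out the non-trivial intersecting family $\mathcal H(x)\cup\{F\in\mathcal H:T_2\subseteq F\}$ (which, when $t_2\le k-d+1$, is strictly larger than $\mathcal H(x)$ and would falsify your conclusion).
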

We omit the proof of Theorem~\ref{thm:F} here because the purpose of this paper is to prove Theorem~\ref{thm:main}.
Moreover, when $d=3$ and $t_1=t_2 = t_3 = k-1$, our $n_0$ is $\Omega(k^4)$ so we cannot replace Theorem~\ref{lem_three_sets} by Theorem~\ref{thm:F} in our main proof.
Nevertheless, it would be interesting to know the smallest $n_0$ such that Theorem~\ref{thm:F} holds.


\end{document}